\documentclass[dvipdfmx]{article}
\usepackage{amsmath}
\usepackage{amsfonts}
\usepackage{latexsym}
\usepackage{mathtools}
\usepackage{amsthm}
\usepackage{amssymb}
\usepackage{tikz}
\numberwithin{equation}{section}
\newcommand{\cat}{\text{cat}}

\newcommand{\R}{\mathbb{R}}
\newcommand{\Z}{\mathbb{Z}}

\newcommand{\grad}{\text{grad}}
\newtheorem{theorem}{Theorem}
\newtheorem{theorem*}{Theorem}
\newtheorem{lemma}{Lemma}
\newtheorem{definition}{Definition}
\newtheorem{proposition}{Proposition}
\newtheorem{corollary}{Corollary}
\usepackage{docmute}

\title{An extension of Lusternik-Schnirelmann category of closed 1-form to non compact manifolds}
\author{Fukushi Kenji}

\begin{document}
\maketitle

\begin{abstract}
\noindent
Michael Farber introduced the Lusternik-Schnirelmann category $\cat(M,\xi)$ for the pair of finite CW complex $M$ and first-order cohomology $\xi$.
   It is inspired by the Morse-Novikov theory, which is a closed 1-form version of the Morse theory.
   An important result of this theory is that if the number of zeros of a closed 1-form $\omega$ on a closed manifold is less than $\cat(M,[\omega])$, then any gradient flows of $\omega$ has at least one homoclinic cycle.
   This paper begins with an explanation of Lusternik-Schnirelmann theory of closed 1-form, and extends Farber's results to general non-compact manifolds.
   We will also explain that Farber's results hold equally well on non compact manifolds, and explain the new phenomena related gradient flows of $\omega$ that occurs on non compact manifolds.
\end{abstract}

\section{Introduction}
The behavior of a function on a manifold is highly dependent on the topological properties of a manifold. Morse theory and Lusternik-Schnirelmann category (LS category) theory are both theories that clarify the relationship between the topology of a manifold and the properties of a function regarding its critical points.
An important consequence is that it gives a topological lower bound on the number of critical points for smooth Morse functions and more general functions on manifolds.

Morse theory is a theory that uses Morse functions to study the topology of manifolds.
The Morse function is a function whose critical points are all non-degenerate, and the most important result in Morse theory is that  the CW complex which is in the same homotopy type as the manifold can be constructed from the information of critical points.
Also, as a colorally of this reslut, the $k$-th critical points of the Morse function exist greater than or equal to the $k$-th Betti number of the manifold. In other words, the Betti number is the lower limit of the number of critical points of the Morse function, and this is called the Morse inequality.

Sergei Novikov generalized Morse theory by using closed 1-forms and their zeros instead of Morse functions. This is because a closed 1-form can be thought of as a derivative of a multivalued function, and if a certain homology theory can be constructed from the zeros of the closed 1-form, a generalization of the Morse inequality to multivalued functions can be obtained.
Novikov obtained a homology theory of local system coefficients (Morse-Novikov theory) whose generator is the zeros of a certain closed 1-form (Morse 1-form) . $\omega$ is called Morse 1-form if all zeros have a neighborhood such that $\omega$ coincides with the derivation of some Morse function. The Betti number of this homology gives the lower bound of the zero point of the Morse 1-form.

LS category theory considers an integer $\cat(X)$ of topologocal space $X$ which is greater than or equal to 0. $\cat(X)$ is homotopy-invariant.
$\cat(X)$ is the minimum number of null-homotopic open sets in $X$ such that are necessary to cover $X$. The most important result of this theory is that a smooth function on a closed manifold always has critical points greater than or equal to $\cat(X)+1$.
The important technique of Morse theory and LS category theory is the deformation of manifolds by the flow along some vector field called the gradient of a function, and both provide a lower limit on the number of critical points, and because their methods and results are similar, they are often compared.

Michael Farber extended the LS category theory based on Morse-Novikov theory.
He introduced the invariant $\cat(M,\xi)$ which gives an integer greater than or equal to zero for a pair of a finite CW complex and first-order cohomology class on it.
Whereas the Morse-Novikov theory gives a lower bound on the zeros of the Morse 1-form,
This category does not give a lower bound on the zeros of closed 1-forms of the same cohomology class. As we will see later, this is because, from the perspective of dynamical systems, it deals with a dynamical system that is different from Morse theory, LS category theory, and Morse-Novikov theory.
Therefore, in the LS category theory of closed 1-forms, there is the following new relationship between the zeros of closed 1-forms and the dynamical system of a certain flow. The flow is generated by a vector field determined from $\omega$ and the Riemannian metric, and is a generalization of the gradient of the function.

\begin{theorem*}
Let $M$ be a closed manifold and $\omega$ be a closed 1-form on it. If the number of zeros of $\omega$ is less than $\cat(M,[\omega])$, then $\omega$ metric the gradient flow of has at least one homoclinic cycle for any Riemannian metric.
\end{theorem*}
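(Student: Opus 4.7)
The plan is to prove the contrapositive: assuming the gradient flow $\Phi_t$ of $\omega$ has no homoclinic cycle, I will exhibit an open cover of $M$ witnessing $\cat(M,[\omega]) \leq m$, where $m$ denotes the number of zeros of $\omega$. Choose small contractible neighborhoods $U_1, \ldots, U_m$ of the zeros $p_1, \ldots, p_m$; each is null-homotopic in $M$. The remaining piece $F_0 := M \setminus \bigcup_i \overline{V_i}$ (with each $V_i$ compactly contained in $U_i$) must then be shown to be $N$-movable with respect to $[\omega]$ for every prescribed $N > 0$, so that the cover $\{F_0, U_1, \ldots, U_m\}$ realizes the desired bound on $\cat(M,[\omega])$.

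For the $N$-movability of $F_0$, I would take the homotopy $h_t$ to be (a reparametrization of) $\Phi_t$ itself, lifted to the cyclic cover of $M$ associated with $[\omega]$. Along any orbit, $\omega$ decreases at rate $|\grad_g \omega|^2 \geq 0$, strictly positive away from the zeros. The no-homoclinic-cycle hypothesis forces the chain recurrent set of $\Phi_t$ to be exactly $\{p_1, \ldots, p_m\}$; hence by a Conley-type argument, on the compact set $\overline{F_0}$ every orbit staying outside $\bigcup_i U_i$ accumulates an $\omega$-decrease of at least $N$ within a uniform time $T = T(N)$. Combined with compactness of $M$, this gives the uniform control of the lifted integral $\int h_t^*\omega$ required by the definition of $N$-movability on the portion of $F_0$ whose orbits stay out of the $U_i$.

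The main obstacle is the orbits that enter some $U_i$: near a zero the gradient vanishes and $\omega$ ceases to decrease, so a naive homotopy stalls. I would deal with this by modifying $\Phi_t$ inside each $U_i$, replacing it with an auxiliary vector field that pushes orbits transversally back out through $\partial U_i$ before reinstating the gradient. The no-homoclinic-cycle assumption enters decisively here: it guarantees that a modified trajectory cannot re-enter the same $U_i$ infinitely often, so the total number of corrections is uniformly bounded and the accumulated $\omega$-decrease can be driven past any prescribed $N$. Establishing continuity of the modified homotopy on $F_0$ and uniform control of the cumulative drop are the technical heart of the proof; once achieved, the cover $\{F_0, U_1, \ldots, U_m\}$ yields $\cat(M,[\omega]) \leq m$, contradicting $m < \cat(M,[\omega])$.
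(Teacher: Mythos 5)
Your contrapositive skeleton is the right one, and you correctly isolate the central difficulty: the points of $F_0$ lying on stable sets of the zeros, along whose orbits $\int\omega$ stays bounded, so the unmodified gradient flow cannot serve as the $N$-lowering homotopy on all of $F_0$. But your resolution of that difficulty has a genuine gap. Modifying the flow inside each $U_i$ by an auxiliary push-out discards the only tool you have: the no-homoclinic-cycle hypothesis is a statement about genuine heteroclinic orbits of the gradient flow, so it says nothing directly about recurrence of your \emph{modified} trajectories, and the claim that a corrected trajectory ``cannot re-enter the same $U_i$ infinitely often'' is asserted, not proved --- proving it is exactly the hard step. Even granting finitely many returns per point, you would still need a bound \emph{uniform over $F_0$} on the number of excursions, together with a uniform lower bound on the $\omega$-drop per excursion, in order to drive the integral below an arbitrary prescribed $N$ by a single continuous homotopy; nothing in the proposal supplies this uniformity, and continuity of the pieced-together homotopy across the locus separating orbits that do and do not enter a given $U_i$ is left open. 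A further (non-load-bearing) error: absence of homoclinic cycles does \emph{not} force the chain recurrent set to equal $\{p_1,\dots,p_m\}$ for gradient flows of non-exact closed $1$-forms --- orbits descending around a cycle of negative period are chain recurrent --- although for the step where you invoke this, compactness of $M\setminus\bigcup_i V_i$ and $|\grad(\omega)|^2\geq c>0$ there already give the uniform time $T=N/c$ without any Conley theory.

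The paper closes precisely this gap without ever modifying the flow. For the given $N$ it builds around each zero a special contractible neighborhood $B_i$ with the property that any genuine orbit exiting $B_i$ through the exit face $\partial_-B_i$ and later re-entering $\mathrm{Int}(B_i)$ must accumulate $\int\omega\leq -N$ in between; the proof of that property is a limit-extraction argument in which a sequence of violating orbit segments converges to a chain of genuine heteroclinic orbits, i.e.\ a homoclinic cycle, contradicting the hypothesis. The cover is then $U=\{x:\int_{x\cdot[0,t_x]}\omega=-N\text{ for some }t_x>0\}$ together with $U_i=\{x:\ x\cdot t\in\mathrm{Int}(B_i)\text{ while the integral is still }>-N\}$; each $U_i$ deformation retracts onto the contractible $B_i$ via the first-entry time (shown continuous), and the unmodified flow is the homotopy on $U$. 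If you want to salvage your route, you would have to prove an exit/re-entry estimate of exactly this kind for your corrected flow, at which point you will have reproduced the paper's key lemma in a strictly harder setting.
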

A homoclinic cycle is a collection of flow trajectories that connect critical points and move the critical points periodically, and is a very interesting trajectory from a dynamical system perspective.

In this paper, we extend this closed 1-form LS category to non compact manifolds. As a main result, we show that the gradient flow of $\omega$ has a homoclinic cycle when the following is satisfied.

\begin{itemize}
   \item The number of zeros of $\omega$ is less than $\cat(X,\omega)$
   \item $\grad(\omega)$ is a complete vector field, and there exists $c>0$ such that $|\grad(\omega)|>c$ outside some neighborhood of the zeros.
\end{itemize}

We will also confirm that in the case of compact manifolds, it does not matter how to take the Riemann metric that gives the flow, but in the case of non-compact manifolds, how to take the Riemann metric becomes important.
In particular, for non-compact manifolds, there are cases where a homoclinic cycle does not exist even if the conditions regarding the zero point are satisfied. In this case, a phenomenon occurs that does not occur in the compact case, where the flow escapes to infinity.
We will also give an example where $\grad(\omega)$ is complete when combined with the curvature condition.

\tableofcontents

\section{An extension of LS category to closed 1-form}

\subsection{Lusternik-Schnirelmann category of closed 1-form}

Morse theory is a theory that studies the topology of a manifold using gradient flows of Morse functions, and the LS category is a theory that also uses gradient flows of functions to connect the dynamical property of gradient such as the number of critical points and the topological complexity. In both cases, gradient flows of functions play a central role.

Although the flow becomes complicated when extending Morse theory to a closed 1-form, we constructed a homology theory using an easy-to-handle flow by using an appropriate Riemannian metric. This dynamical system is a Morse-Smale System ([12]), which is a dynamical system that has many properties similar to a gradient system of functions.
A Morse-Smale System is a dynamical system in which there is hyperbolicity at the fixed points of the system and transversality between invariant manifolds. In such systems, there are many results similar to gradient systems. In other words, Morse theory extended to closed 1-form has in common with Morse theory in that it is a Morse-Smale system, although the dynamical system it deals with is not a gradient system.

Extending the LS category with respect to closed 1-forms means studying the gradient flow of general closed 1-forms. Such flows no longer have anything in common with the Morse-Smale system.
For this reason, a new phenomenon related to dynamical systems exists, that is the existence of homoclinic cycles.

A homoclinic cycle is a family of heteroclinic orbits that connect fixed points of the gradient flow of $\{\gamma_1\dots \gamma_n \}$. For $1\leq i \leq n-1$.

\[\lim_{t\rightarrow \infty} \gamma_i(t) = \lim_{t\rightarrow -\infty} \gamma_{i+1}(t) \]
and \[\lim_{t\rightarrow \infty}\gamma_n(t) = \lim_{t\rightarrow -\infty} \gamma_1(t) \]
If $n=1$, it is especially called a homoclinic orbit.

\begin{theorem}
If the number of zeros of $\omega$on a closed manifold $M$ is less than the generalized invariant $\cat(M,[\omega])$ of $\cat(M)$, then any gradient flows of $\omega$ has at least one homoclinic cycle for any Riemannian metric.
\end{theorem}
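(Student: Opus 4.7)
The plan is to prove the contrapositive: if some Riemannian metric $g$ yields a gradient flow $\phi_t$ of $\omega$ without any homoclinic cycle, then the number of zeros of $\omega$ is at least $\cat(M,[\omega])$. Writing the zeros as $p_1,\dots,p_k$, I aim to construct an open cover $\{W_i\}_{i=1}^k$ of $M$ by $[\omega]$-categorical sets in Farber's sense, which will force $\cat(M,[\omega]) \le k$.

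First I would produce the local pieces. Around each $p_i$, choose small, pairwise-disjoint, contractible open neighborhoods $U_i$ on which $\omega = df_i$. Each $U_i$ is trivially $[\omega]$-categorical, with the primitive $f_i$ certifying the condition. I would then turn to the complement $K = M \setminus \bigcup_i U_i$. Since $K$ is compact and contains no zero of $\omega$, the vector field satisfies $|\grad(\omega)| \ge c > 0$ on $K$, so along any orbit a local primitive of $\omega$ strictly decreases at a uniform rate while the orbit remains in $K$. The key dynamical claim is that there exists $T > 0$ such that $\phi_T(K) \subset \bigcup_i U_i$. I would argue this by contradiction: if no such $T$ exists, a sequence of orbits lingering in $K$ for arbitrarily long times yields, via compactness of $M$ and Arzel\`a--Ascoli, a complete orbit of $\phi_t$ contained in $K$. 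The $\alpha$- and $\omega$-limit sets of such an orbit are nonempty, flow-invariant and, by monotonicity of $\omega$ along orbits away from the zeros, must consist of zeros of $\omega$. Linking such heteroclinic orbits into a chain yields a homoclinic cycle in the sense of the paper, contradicting the hypothesis.

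Granted the exit time $T$, define $W_i$ as a slight enlargement of $U_i$ together with all points of $K$ whose first entry into $\bigcup_j U_j$ under $\phi_t$ occurs through $U_i$. Openness follows from continuity of the first-hit map, and $\{W_i\}$ covers $M$. The restricted flow $\{\phi_t\}_{t \in [0,T]}$ provides a homotopy $h_t = \phi_t|_{W_i}$ deforming the inclusion $W_i \hookrightarrow M$ to a map landing in $U_i$, and integrating $\omega$ along the flow lines exhibits $\omega|_{W_i}$ as a coboundary after this deformation, certifying that each $W_i$ is $[\omega]$-categorical.

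The main obstacle is the dynamical claim of the second paragraph: ruling out slow-leakage orbits that remain in $K$ for arbitrarily long times without assembling into a homoclinic cycle. The monotone decrease of $\omega$ along the flow (which lifts to strict monotonicity on the infinite cyclic cover associated to $[\omega]$) combined with compactness of $M$ drives the argument, but care is needed to pass from an abstract limit orbit inside $K$ to a genuine homoclinic cycle joining zeros, especially when several zeros are present and one has no transversality information on stable and unstable sets. A subsidiary technical point in the third step is that Farber's categoricity condition for nontrivial $[\omega]$ asks for homotopies that ``travel down'' the infinite cyclic cover in a controlled way; the monotonicity of $\omega$ along the gradient flow is precisely what makes the flow homotopy admissible here.
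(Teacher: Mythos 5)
Your proposal has a genuine gap at its central step, and the gap is not a technicality: the key dynamical claim is false for closed 1-forms with nonzero periods. You assert that if no homoclinic cycle exists then there is a uniform time $T$ with $\phi_T(K)\subset\bigcup_i U_i$, justified by arguing that any complete orbit in $K$ has $\alpha$- and $\omega$-limit sets consisting of zeros of $\omega$. That is the LaSalle--Lyapunov argument for an exact form $df$, and it does not survive the passage to a multivalued primitive. On the infinite cyclic cover the lifted primitive does decrease strictly along orbits, but an orbit may descend through infinitely many fundamental domains; downstairs this is a recurrent orbit whose limit set need contain no zero at all. Concretely, perturb $d\theta_1$ on $T^2$ inside a small disk so as to create a pair of zeros: orbits outside the basin of that disk circulate around the $\theta_1$-cycle forever, never enter any neighborhood of a zero, and form no homoclinic cycle. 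Such circulating orbits cannot be absorbed into your sets $W_i$, so the $k$-set cover you build does not exist, and the contrapositive collapses precisely on the orbits that make the closed-1-form category different from the classical one.

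The paper's proof is structured around this phenomenon rather than against it. It covers $M$ by $n$ contractible sets $U_i$ \emph{plus} one additional set $U$, exactly as Farber's definition permits: $U$ collects the points whose forward orbit integral $\int_{x\cdot[0,t]}\omega$ reaches $-N$ (this is where the circulating orbits live, with the gradient flow itself serving as the admissible homotopy), while $U_i$ collects the points that reach an isolating block $B_i$ around $p_i$ before the integral drops to $-N$. The homoclinic cycle is then extracted not from a complete orbit trapped in $K$, but from the failure of the key lemma that an orbit leaving the exit set $\partial_- B_i$ and later re-entering a block must have integral at most $-N$ along the way; the contradiction with $n<\cat(M,[\omega])$ comes from exhibiting an $(n+1)$-piece categorical cover, not an $n$-piece one. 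To repair your argument you would need to introduce the set $U$ and replace the claim ``every orbit enters a neighborhood of a zero in bounded time'' with the correct dichotomy: every orbit either enters some block while its integral is still above $-N$, or its integral falls below $-N$ in finite time.
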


Homoclinic cycles are very important orbits that have many applications in Hamilton dynamical systems and chaos theory.
It is clear that this trajectory does not exist in any gradient systems of functions.
The new invariant $\cat(M,[\omega])$ is no longer the quantity that gives the lower bound of the fixed points of flow. On the other hand, if the zero point of the closed 1-form is less than $\cat(M,[\omega])$, a homoclinc cycle exists. This study clarifies the relationship between such new dynamical property and topology.

\subsection{Definition of $\cat(M,[\omega])$}

Michae Farber extended the Lusternik-Schnirelmann category to closed 1-form. Farber defined an invariant of integers greater than or equal to 0 for the finite CW complex and first-order cohomology $[\omega]$ above it. If $[\omega]=0$, then $\cat(M,0)=\cat(M)+1$, which means $\cat(M,[\omega])$ is the generalization of the usual LS category.
To define $\cat(M,[\omega])$ for a general CW complex, we need to define a generalized differential form which is called continuous closed 1-form. A general differential form on a differentiable manifold is a smooth section into cotangent bundle. But continuous closed 1-form is defined as a section into a subsheaf of the sheaf of continuous functions.
The reason for extending the differential form to CW complexes is that defining them on CW complexes is easier to handle in terms of homotopy theory, and allows line integrals along non-smooth paths.

In this paper, we do not deal with CW complexes that are not smooth manifolds, so we do not introduce continuous closed 1-forms.

\begin{definition}\label{defls}
For a closed manifold $M$ and a first-order cohomology [$\omega$] on it, let $\cat(M,[\omega])$ be the smallest integer $k\geq 0$ such that satisfies the following. It is called the LS category of $\omega$.
\begin{itemize}
   \item For any positive integer $N$, there exist null-homotopic open sets $U_i, 1\leq i \leq k$ and open set $U$ on $M$ such that $M=U_1\cup\dots \cup U_k \cup U$
   \item (Conditions for $U$) There is a homotopy $h_t:U\times [0,1]\rightarrow M$ such that $h_0=id$, for any $x\in U$
   \[\int_{\gamma(x)}\omega \leq -N \]
   Where $\gamma(x)$ is a path along the homotopy $h_t$.
\end{itemize}
\end{definition}

$\cat(M,0)$ matches the normal LS category. This is because the line integral along the gradinet $\int _{\gamma}\omega := df = f(\gamma(1))-f(\gamma(0))$ is bounded on any closed manifolds, so
$U=\emptyset$ for sufficiently large $N$. In order to generalize $\cat(M,\omega)$ to non-compact manifolds later, common properties such as homotopy invariance will be discussed later.

When a manifold is deformed by using a closed 1-form negative gradient flow, it is divided into a part where the flow passes near the equilibrium point and a part where the integral value continues to decrease. The reason why the integral value continues to decrease is that when $\omega$ is integrated, it has a negative cycle, which did not exist in the gradient system.

If $M$ is deformed along the gradient flow $\phi_t$ between times $0\leq t \leq t$, then if we compare the definition and the above picture, we get a null-homotopic opening. The set $U_i$ corresponds to the parts that gather at the equilibrium point, and $U$ corresponds to the parts that cycle along the gradient.
If the number of equilibrium points is less than $\cat(M,[\omega])$, the equilibrium points and $U_i$ do not directly correspond. This is because if it were to correspond, the categorical coverage of the number of sheets less than $\cat(M,[\omega])$ would be removed. In other words, the parts that gather at the fixed point are not necessarily contractible, and this phenomenon leads to the existence of homoclinic cycles.

\subsection{Evaluation of the number of zeros of $\omega$ and $\cat(M,[\omega])$}

A smooth function on a closed manifold $M$ always has at least $cat(M)+1$ critical points. For the general closed 1-form $\omega$ rather than the exact form $df$, the generalized $cat(M,[\omega])$ is no longer a lower bound at the zeros of $\omega$. However, if $\omega$ has zero points less than $cat(M,[\omega])$, the negative flow of $\omega$ has an interesting trajectory in terms of dynamical system theory called a homoclinic cycle.
This does not depend on how to take a Riemannian metric for compact manifolds.

On a closed manifold $M$, for any $\xi \neq 0\in H^1(M,\Z)$, there is a closed 1-form with at most one zero point such that $[\omega]=\xi$.
Also, for any $n$, there exists a pair of CW complex and continuous 1-form such that $\cat(M,[\omega])=n, [\omega]\neq 0$. From these two results, we can see that there actually exists a set of $(M,[\omega])$ in which the number of zeros of $\omega$ is less than $\cat(M,[\omega])$.

\begin{theorem}For any closed manifold $M$ and $\xi \neq 0 \in H^{1}(M)$, there is a closed 1-form $\omega$ with at most one zero point and $[\omega]=\xi$.
\end{theorem}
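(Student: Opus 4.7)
The plan is to produce a Morse closed 1-form representing $\xi$, consolidate all its zeros inside a single contractible chart on which the form is exact, and then modify the exact function inside this chart to have at most one critical point.

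First I would start with any smooth closed 1-form $\omega_0$ representing $\xi$. Perturbing by a small exact form $d\psi$ and invoking standard transversality, I can assume $\omega_0$ is Morse, with finitely many non-degenerate zeros $p_1,\ldots,p_k \in M$. Since $M$ is connected, choose a smoothly embedded arc $\alpha$ through all the $p_i$ and let $U$ be a tubular neighborhood of $\alpha$; because $\alpha$ is contractible, $U$ is diffeomorphic to an open ball in $\R^n$. On this contractible set, $\omega_0|_U = df$ for a smooth function $f:U\to\R$ whose critical points are exactly $p_1,\ldots,p_k$.

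The main step is to modify $f$ on $U$, without changing it outside a slightly smaller ball $U'\Subset U$ containing all the $p_i$, so that the new function has at most one critical point in $U$. Fix a point $q\in U'$, a cutoff $\chi:U\to[0,1]$ with $\chi\equiv 1$ near $q$ and $\chi\equiv 0$ outside $U'$, and an auxiliary model function $h$ whose only critical point is a non-degenerate minimum at $q$. Set $g_N:=f+\chi(Nh-f)$ for a large scaling $N>0$. A direct computation gives
\[
 dg_N=(1-\chi)\,df+N\,d(\chi h)-f\,d\chi,
\]
which on $\{\chi\equiv 1\}$ reduces to $N\,dh$ (with unique zero at $q$), on $\{\chi\equiv 0\}$ reduces to $df$ (with no zeros, as all zeros of $\omega_0$ lie in $U'$), and in the transition region $\{0<\chi<1\}$ is dominated by $N\,d(\chi h)$. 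For generic $h$ and $\chi$, the set where $d(\chi h)=0$ is $0$-dimensional and misses the zero locus of the $O(1)$ correction $(1-\chi)df-f\,d\chi$ (another codimension-$n$ condition on top of the $0$-dimensional set, generically empty), so $dg_N$ is nonzero throughout the transition annulus for $N$ sufficiently large.

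Finally, define $\omega:=dg_N$ on $U$ and $\omega:=\omega_0$ on $M\setminus\overline{U'}$; these agree on the overlap (where $g_N=f$), so $\omega$ is a smooth closed 1-form on $M$ with at most one zero, and $[\omega]=[\omega_0]=\xi$ since $\omega-\omega_0=d(g_N-f)$ with $g_N-f$ extending by zero to a smooth function on $M$. The hardest step is the transition analysis: guaranteeing that $dg_N$ has no zeros in $\{0<\chi<1\}$ requires the genericity of $(f,h,\chi)$ and is sensitive to accidental symmetries — if for example $\chi h$ is rotationally symmetric then $d(\chi h)$ vanishes along whole spheres rather than at isolated points, and one must break the symmetry (for instance by choosing $h$ non-radial or by an additional small exact perturbation of $\omega_0$) to reduce the bad set to dimension zero.
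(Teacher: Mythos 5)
The paper itself states this theorem without proof (it is imported from Farber [6]), so there is no internal argument to compare against; judged on its own merits, your proposal has a genuine gap, and it sits exactly where you flag ``the hardest step.'' The decisive symptom is that your argument never uses $\xi\neq 0$: if it were correct it would apply verbatim to $\xi=0$ and produce, on any closed manifold, a smooth function with at most one critical point, which is impossible (a maximum and a minimum of a nonconstant function are always distinct critical points). So some step must fail, and it is the transition-region claim.

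Concretely, $g_N=(1-\chi)f+N\chi h$ gives $dg_N=N\,d(\chi h)+c$ with $c=(1-\chi)df-f\,d\chi$ of size $O(1)$, and large $N$ works against you rather than for you. The function $\chi h$ is compactly supported in $U'$ and (for $h\geq 0$ with minimum $0$ at $q$) attains a positive maximum at some point $x^{*}$ with $\chi(x^{*})<1$; generically this is a nondegenerate critical point of $\chi h$, a zero of $d(\chi h)$ of local degree $(-1)^n\neq 0$. On a small sphere around $x^{*}$ one has $|N\,d(\chi h)|>\sup|c|$ once $N$ is large, so $dg_N$ has the same nonzero degree on that sphere and must vanish inside it: the interpolation manufactures a second zero instead of eliminating one. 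No genericity of $(f,h,\chi)$ can remove it, because the obstruction is homological: since $dg_N$ agrees with $\omega_0$ near $\partial U'$, the sum of the local indices of its zeros inside $U'$ is forced to equal the Euler characteristic $\chi(M)$ (Poincar\'e--Hopf, all zeros of $\omega_0$ lying in $U'$), whereas your single nondegenerate minimum at $q$ contributes index $+1$; extra zeros are therefore unavoidable whenever $\chi(M)\neq 1$, in particular for every odd-dimensional $M$. Even choosing a degenerate model zero of the correct index does not suffice, since for \emph{exact} forms on a ball rel boundary there are obstructions beyond the index, coming from the values of $f$ on the entrance and exit sets of $\partial U'$ (already visible in dimension one: $f'>0$ at both endpoints with $f(1)<f(0)$ forces two interior critical points). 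A correct proof must use $\xi\neq 0$ in an essential way: Farber's argument passes to the infinite cyclic cover on which $\omega$ is exact, slides all zeros into a single ball whose boundary data are arranged using that non-exactness, and only then invokes a Takens-type merging lemma whose hypotheses your cutoff construction does not, and cannot, verify.
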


\begin{theorem}\label{thmhatannon}
   For any $n>0$, there is a pair of $(M,\omega)$ such that $n=cat(M,[\omega])$ 
\end{theorem}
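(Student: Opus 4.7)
The plan is to construct, for each positive integer $n$, an explicit pair $(M_n,\omega_n)$ with $[\omega_n] \neq 0$ and $\cat(M_n,[\omega_n]) = n$, proceeding inductively via a product construction. Two auxiliary tools from Farber's theory would enter: a product inequality $\cat(X \times Y, \pi_X^*\alpha + \pi_Y^*\beta) \leq \cat(X,\alpha) + \cat(Y,\beta)$, proved by taking pairwise intersections of the categorical covers of the two factors and combining the pushing homotopies additively, so that integrals of the summed 1-form along product paths still tend to $-\infty$; and a Novikov cup-length lower bound, asserting that if classes $u_1,\ldots,u_n$ have nonzero cup product in an appropriate Novikov-type cohomology of $M$, then $\cat(M,\xi) \geq n$.

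For the base case $n=1$, I would take $M_1 = \Sigma_g$ for some $g \geq 2$ and let $\omega_1$ be a closed 1-form representing a generic (say irrational) nonzero class in $H^1(\Sigma_g;\R)$. The upper bound $\cat(\Sigma_g,[\omega_1]) \leq 1$ follows from an explicit cover in the sense of Definition \ref{defls}: a single null-homotopic open neighborhood of the (finitely many) zeros of $\omega_1$, together with an open set $U$ on which a negative-gradient-like flow of $\omega_1$ pushes the line integrals in the definition to arbitrarily negative values. The matching lower bound $\cat(\Sigma_g,[\omega_1]) \geq 1$ then follows from the cup-length bound, or more directly from the observation that no global homotopy of $\Sigma_g$ can decrease all line integrals to $-\infty$, because $\chi(\Sigma_g) \neq 0$ forces $\omega_1$ to have zeros and hence obstructs any globally nonvanishing flow along $-\omega_1$.

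For general $n \geq 2$, set $M_n = \Sigma_g^n$ and $\omega_n = \sum_{i=1}^n \pi_i^*\omega_1$, where $\pi_i \colon M_n \to \Sigma_g$ is projection onto the $i$-th factor. The product inequality yields $\cat(M_n,[\omega_n]) \leq n$ immediately. For the matching lower bound, the $n$-fold cup product of the factor generators is nonzero in the Novikov cohomology of $M_n$ by a K\"unneth-type argument, so the cup-length bound forces $\cat(M_n,[\omega_n]) \geq n$, which closes the induction.

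The principal technical obstacle is the Novikov cup-length lower bound, which sits outside the definitions stated in the excerpt and must either be imported from Farber's original work or developed as a preliminary lemma. In particular, verifying the nonvanishing of the $n$-fold cup class in the Novikov cohomology of $\Sigma_g^n$ via K\"unneth is the delicate step. By contrast, the product upper bound and the base-case cover construction are essentially direct generalizations of their classical Lusternik-Schnirelmann analogues, with the extra bookkeeping that the homotopy defining the distinguished set $U$ pushes the relevant 1-form integrals to $-\infty$, so these are expected to be routine.
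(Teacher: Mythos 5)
Your route is genuinely different from the paper's: the paper takes $X=Y\vee S^1$ with $\xi|_Y=0$ and $\xi|_{S^1}\neq 0$ and proves $\cat(X,\xi)=\cat(Y)$ (lower bound by lifting the moving set $U$ to the infinite cyclic cover of the circle factor and retracting onto $Y$; upper bound by spinning the $V_0$-piece of a categorical cover of $Y$ around the wedge circle to drive the integrals to $-\infty$). The key design choice there is that $\xi$ is kept \emph{trivial} on the factor carrying the topology, so the full ordinary category of $Y$ survives. Your construction does the opposite --- it spreads $\xi$ diagonally over all factors of $\Sigma_g^n$ --- and this is exactly where the argument breaks.

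The genuine gap is the lower bound $\cat(\Sigma_g^n,\xi_n)\geq n$. The only cup-length estimate available (Theorem 6 of this paper, i.e.\ Farber's) permits just \emph{two} twisted classes $u\in H^p(M,a^{\xi})$, $v\in H^q(M,b^{\xi})$, the remaining $r$ classes being ordinary, and concludes $\cat(M,\xi)>r$. For $\xi_n=\sum_i\pi_i^{*}\xi$ the local system $a^{\xi_n}$ is the external product $a^{\xi}\boxtimes\cdots\boxtimes a^{\xi}$, and for a nontrivial rank-one system on a surface one has $H^0(\Sigma_g,a^{\xi})=H^2(\Sigma_g,a^{\xi})=0$, so K\"unneth concentrates $H^{*}(\Sigma_g^n,a^{\xi_n})$ entirely in degree $n$. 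Hence any admissible $u,v$ have $p,q\geq n$, and the product $u\cup v\cup w_1\cup\cdots\cup w_{n-1}$ sits in degree at least $2n+(n-1)>2n=\dim\Sigma_g^n$, so it vanishes identically; the estimate can certify at most $\cat(\Sigma_g^n,\xi_n)\geq 1$, not $\geq n$. (A hypothetical bound using $n$ twisted classes in $n$ distinct local systems is not among the tools in the paper and would have to be proved separately.) Indeed it is far from clear that $\cat(\Sigma_g^n,\xi_n)$ equals $n$ at all --- making $\xi$ ``generic'' on every factor is precisely the mechanism that makes $\cat(\cdot,\xi)$ collapse. A secondary weakness: in the base case, the claim that $\chi(\Sigma_g)\neq 0$ obstructs a homotopy pushing all integrals to $-\infty$ is not an argument, since the homotopy in Definition \ref{defls} need not be the flow of any representative $1$-form; the bound $\cat(\Sigma_g,\xi)\geq 1$ should instead come from the pairing $H^1(\Sigma_g,a^{\xi})\otimes H^1(\Sigma_g,a^{-\xi})\to H^2(\Sigma_g,k)$. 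To repair the induction you would need either the paper's wedge construction or some other lower-bound mechanism that survives products with a diagonal class.
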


\begin{proof}
   Let the finite CW complex $Y$ be $X=Y\vee S^1$. Let $\xi\in H^1(X,\R)$ satisfy $\xi|_{Y}=0,\xi|_{S^1}\neq 0$. At this time
   \begin{eqnarray*}
     \cat(X,\xi) = \cat(Y)
   \end{eqnarray*}
holds true. From this, we can see that the category of closed 1-form can take any value.

   Let $X=U\cup U_1 \cup \dots \cup U_k $ be a categorical covering over $N$. If $U^{\prime}=U\cap Y,U^{\prime}_i=U_i\cap Y$, then
   $Y=U^{\prime}\cup U_1^{\prime} \dots \cup U^{\prime}_k$ is the covering of $Y$, and using this covering $\cat(X,\ xi)\geq \cat(Y)$.

   $U_i^{'}$ is null-homotopic in $Y$. This can be seen from the existence of retraction from $Y\vee S^1$ to $Y$.
   By showing that $U^{'}$ is also null-homotopic in $Y$, $\cat(X,\xi)\geq \cat(Y)$ is proven.
   $h_t:U\rightarrow Let X$ be a homotopy whose integral value is less than or equal to $-N$. Let $\tilde{X}$ be a covering that solves the fundamental group of $S^1$. In other words, it is a space in which each integer on the covering $\R$ of $S^1$ has a copy of $Y$. Equate $Y_0\subset \tilde{X}$ and $Y$.
   Let ratraction $r:\tilde{X}\rightarrow Y$ be the retraction that collects each $Y_n(n\neq 0)$ to the origin of $Y_0$.
   $h_t:U^{\prime}\rightarrow X$ lifts to $\tilde{h}_t:U^{\prime}\rightarrow Y_0 \subset \tilde{X}$. Because $\tilde{h}_t$ causes $U^{\prime}$ to move homotopically to a different sheet from $Y_0$ in $\tilde{X}$,
   $r\circ \tilde{h}_t$ is the transformation from $U^{'}$ to the base point of $Y$. Therefore, $U^{'}$ is collapsible in $Y$. Therefore, $\cat(X,\xi)\geq \cat(Y)$.

   $\cat(X,\xi) \leq \cat(Y)$ is shown. Let $Y=V_0\cup \cdots \cup V_r$ be the categorical open cover of $Y$. Since it is clear that $Y$ is reducible, we set it as $r\geq 1$.
   Let $U=S^1\cup V_0,U_i=V_i (1 \leq i \leq r)$. At this time, $V_0$ is taken so that it does not include the base point of the wedge sum.
   Transform $V_0$ homotopically to a single point in $Y$. Then, if we move $Y$ and $S^1$ to the base point where they are wedged and then rotate around $S^1$ by homotopy, we can calculate the integral from $\xi|_{S^1}\neq0$ You can lower the value as much as you like.
   Therefore, $\cat(X,\xi)\leq\cat(Y)$ holds true.
\end{proof}

The former theorem is a dynamical system result of the number of zeros in $\omega$,
The latter theorem is a result regarding the homotopy invariant $\cat(X, \xi)$.
Combining these basic results, we can conclude that there exists a closed 1-form that has zeros less than $\cat(M,[\omega])$.

\subsection{Evaluation of $cat(X,[\omega])$ by cup product}
$\cat(M,[\omega])$ has similar results in LS category. It is an evaluation of the lower bound of $\cat(M,[\omega])$ using the length of the cup product.

A well-known method for evaluating the lower bound of a classical LS category is one that uses the cup product.

\begin{theorem}Suppose the topological space $M$ has cup product length $n$. Cup product length $n$ means that there are cohomology elements $v_i\in H^{*}(M) (*>0, 1\leq i \leq n)$ such that $v_1 \cup v_2 \dots \cup v_n \neq 0$. Then $\cat(M) \geq n+1$
\end{theorem}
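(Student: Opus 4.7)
The plan is to prove the contrapositive: if $M$ can be covered by $k$ null-homotopic open sets $U_1, \dots, U_k$, then every $k$-fold cup product of positive-degree classes vanishes in $H^*(M)$, which forces $k \geq n+1$ whenever the cup-length is at least $n$.

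First I would exploit null-homotopy to lift classes to the relative cohomology. Since each $U_i$ is contractible in $M$, the inclusion $j_i : U_i \hookrightarrow M$ is null-homotopic, hence $j_i^{*} v_i = 0 \in H^{*}(U_i)$ for any $v_i \in H^{*}(M)$ with $*>0$. From the long exact sequence of the pair $(M,U_i)$ I would then obtain a class $\tilde{v}_i \in H^{*}(M,U_i)$ whose image under the natural map $H^{*}(M,U_i) \to H^{*}(M)$ is $v_i$.

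Next I would use naturality of the relative cup product, which takes the form
\[
H^{*}(M,U_1) \otimes \cdots \otimes H^{*}(M,U_n) \;\longrightarrow\; H^{*}(M, U_1 \cup \cdots \cup U_n),
\]
to form $\tilde{v}_1 \cup \cdots \cup \tilde{v}_n$, a class whose image in $H^{*}(M)$ equals $v_1 \cup \cdots \cup v_n \ne 0$. If the categorical cover had size $k \leq n$, then $U_1 \cup \cdots \cup U_n$ would already equal $M$ (after padding with repeats), so the product would live in $H^{*}(M,M) = 0$, forcing $v_1 \cup \cdots \cup v_n = 0$, a contradiction. Hence $\cat(M) \geq n+1$.

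The only subtle step is the existence of the relative lift $\tilde{v}_i$: it requires the inclusion $U_i \hookrightarrow M$ to be a cofibration (or at least a good pair), which is why the theorem is typically stated with an additional hypothesis, such as $M$ being a CW complex or being covered by open sets whose inclusions are cofibrations. I would note this hypothesis explicitly and either assume $M$ is an ANR/CW complex (as in the rest of the paper) or replace $U_i$ by a slightly smaller open set with closure contractible in $M$ to legitimize the long exact sequence. With this technicality handled, the argument above goes through verbatim.
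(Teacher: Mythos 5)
Your proposal is correct and follows exactly the strategy the paper indicates for this classical result: lift each positive-degree class $v_i$ to the relative group $H^{*}(M,U_i)$ using the null-homotopy of the inclusion $U_i \hookrightarrow M$, then multiply via the relative cup product into $H^{*}(M, U_1\cup\cdots\cup U_n) = H^{*}(M,M) = 0$. The paper only sketches this technique rather than writing it out, so your version (including the remark about needing $M$ to be a CW complex or the pairs to be good) is a faithful and slightly more careful rendering of the same argument.
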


An important technique to prove this theorem is to define $v_i\in H^{*}(M)$ as a relative cohomology with a contractible open set $U_i$ $v_i\in H^{*}(M,U_i)$ The purpose was to lift it up and take the cup product. In the case of a category with closed 1-form, in addition to the contractible parts, there are also parts where the integral value decreases, and it is necessary to interpret these parts geometrically.

Hereafter, $[\omega]\in H^1(M,\Z)$, $\pi_1(M)/\ker(\omega)\cong \Z$. In this case, there exists $M$, a cyclic cover $\tilde{M}$, and a smooth function $f$ on $\tilde{M}$, and for the projection $p$ there is a smooth function $p^{*}\omega =df$.

Since the deck transformation group is an infinite cyclic group, we fix its generator $\tau$ to one and use the compact set $K\subset \tilde{M}$ as $\bigcup \tau^i (K)=\tilde{M Fix the set that is }$.

Take one field $k$ below.

\begin{definition}
   The homology class $z\in H_q(\tilde{M};k)$ is movable to $+\infty$ if $\cup_{i>N}\tau^i$ 
\end{definition}

This definition does not depend on how $K$ is chosen. Also, according to the following lemma,
In $\bigcup_{i>N}\tau^i(K)$ for $N$ where $z\in H_q(\tilde{M};k)$ is sufficiently large

When expressed as a cycle, it becomes movable to $+\infty$.

\begin{lemma}
   For phase space $M$, there exists an integer $N$ that satisfies the following.

   (a) If the cycle $c$ in $K$ is homologous to the cycle in $\cup_{i>N}\tau^i(K)$, it is movable to $+\infty$

   (b) If cycle $c$ in $K$ is homologous to the cycle in $\cup_{i<-N}\tau^i(K)$, movable to $-\infty$

\end{lemma}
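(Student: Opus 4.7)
The plan is to rephrase movability as membership in the intersection of a descending filtration of $H_q(\tilde{M};k)$, and then to exploit the finite-dimensionality of the subspace of classes supported on $K$ to force the filtration to be eventually constant on that subspace. For each integer $i$, set
\[
F_i := \mathrm{Im}\bigl(H_q(\textstyle\bigcup_{j>i}\tau^j(K);k)\to H_q(\tilde{M};k)\bigr),
\]
a decreasing chain $F_0\supset F_1\supset\cdots$ of $k$-subspaces. By the definition preceding the lemma, a class $z\in H_q(\tilde{M};k)$ is movable to $+\infty$ if and only if $z\in\bigcap_i F_i$.

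Let $G\subset H_q(\tilde{M};k)$ be the image of $H_q(K;k)\to H_q(\tilde{M};k)$, that is, the set of classes representable by cycles supported in $K$. Because $M$ is a finite CW complex and $K\subset\tilde{M}$ is compact, we may take $K$ to be a finite subcomplex of $\tilde{M}$, so $H_q(K;k)$, and hence $G$, is a finite-dimensional $k$-vector space. The descending chain $G\cap F_0\supset G\cap F_1\supset\cdots$ must then stabilize: there exists $N$ with $G\cap F_i=G\cap F_N$ for every $i\ge N$. Since also $F_i\supset F_N$ whenever $i\le N$, this yields $G\cap F_N=\bigcap_i(G\cap F_i)\subset\bigcap_i F_i$.

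With this $N$, part (a) is immediate: if $c$ is a cycle in $K$ that is homologous in $\tilde{M}$ to some cycle $c'\subset\bigcup_{i>N}\tau^i(K)$, then $[c]\in G$ (as $c$ is supported in $K$) and $[c]=[c']\in F_N$, so $[c]\in G\cap F_N\subset\bigcap_i F_i$, which is exactly the condition that $[c]$ be movable to $+\infty$. Part (b) is proved symmetrically using the filtration
\[
F_i^-:=\mathrm{Im}\bigl(H_q(\textstyle\bigcup_{j<-i}\tau^j(K);k)\to H_q(\tilde{M};k)\bigr),
\]
and the $N$ claimed in the lemma is the maximum of the two thresholds produced. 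Since the argument works for each $q$ separately and $\tilde{M}$ has finite cohomological dimension, taking the maximum over the finitely many relevant degrees gives a single $N$ that works uniformly.

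The main obstacle is justifying that $G$ is finite-dimensional over $k$; once that is in place, the conclusion is just the elementary fact that a descending chain of subspaces of a finite-dimensional vector space must stabilize. The finite-dimensionality reduces to choosing $K$ to be a finite CW subcomplex of $\tilde{M}$, which is permissible since the definition of movability does not depend on the choice of compact fundamental domain $K$.
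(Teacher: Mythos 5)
Your argument is correct, and there is nothing in the paper to compare it against: the paper states this lemma without proof (it is imported from Farber's monograph [6], \S 10), so your write-up is a genuine addition rather than a variant. Your route is the standard one: identify ``movable to $+\infty$'' with membership in $\bigcap_i F_i$ for the descending filtration $F_i=\mathrm{Im}\bigl(H_q(\bigcup_{j>i}\tau^j(K);k)\to H_q(\tilde{M};k)\bigr)$, and invoke stabilization of a descending chain of subspaces of the finite-dimensional space $G=\mathrm{Im}\bigl(H_q(K;k)\to H_q(\tilde{M};k)\bigr)$; the inclusion $G\cap F_N=\bigcap_i(G\cap F_i)\subset\bigcap_i F_i$ then gives (a), and (b) is symmetric. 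The only step that deserves to be spelled out more carefully is the finite-dimensionality of $G$: for an arbitrary compact subset of $\tilde{M}$ singular homology need not be finite-dimensional, so one should replace $K$ by a finite subcomplex (or, in the manifold setting, by a regular sublevel set $f^{-1}([0,c])$, a compact manifold with boundary) containing it. This replacement is harmless for the direction you need, since enlarging $K$ to $K'$ turns the hypothesis of (a) for $K$ into the hypothesis of (a) for $K'$, while the conclusion ``movable to $+\infty$'' is independent of the choice of $K$ by the remark preceding the lemma; so the lemma for the enlarged $K'$ implies it for $K$. With that point made explicit, the stabilization argument and the reduction to finitely many degrees $q\le\dim M$ are sound.
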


When the negative gradient flow of $\omega$ at $M$ is pulled back to $\tilde{M}$, it becomes a negative gradient flow regarding $f$ from $p^{*}\omega=df$. The part where the integral continues to decrease due to the flow of $M$ continues to go through cycles in $M$, so the integral value continues to decrease. If we pull this trajectory back toward the covering, we get a trajectory that continues to lower the value of $f$ while lowering the sheet of covering. Therefore, the cycle in which the integral value continues to decrease in flow in $M$ corresponds to the cycle that satisfies movable to $-\infty$ in the covering. This correspondence is the reason for introducing movable to $\pm\infty$. Furthermore, cycles that satisfy movable to $\pm\infty$ have certain algebraic properties.

\begin{theorem}
   The following is equivalent to $z\in H_q(\tilde{M};k)$
   \begin{itemize}
     \item $z$ is movable to $+\infty$
     \item $z$ is movable to $-\infty$
     \item $z$ is an element of Torsion of $k[\tau,\tau^{-1}]$ module $H_q(\tilde{M};k)$
   \end{itemize}
\end{theorem}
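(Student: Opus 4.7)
The plan is to establish the equivalence via the cycle (torsion) $\Rightarrow$ (movable to $+\infty$), (torsion) $\Rightarrow$ (movable to $-\infty$), and (movable to $\pm\infty$) $\Rightarrow$ (torsion). Throughout I write $\Lambda = k[\tau,\tau^{-1}]$ and $A_N = \bigcup_{i > N}\tau^i K$, and I would choose $K$ to be (the closure of) a fundamental domain so that each $A_N$ is a CW subcomplex.

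For the torsion direction, I would start from a nonzero annihilator $p(\tau)z = 0$. After multiplying by a power of $\tau$ one may take $p(\tau) = a_0 + a_1\tau + \cdots + a_n\tau^n \in k[\tau]$ with $a_0 \neq 0$, and the relation rearranges to $z = \tau \cdot w$ where $w = -a_0^{-1}(a_1 z + a_2\tau z + \cdots + a_n\tau^{n-1}z)$. Given a cycle $c$ for $z$ supported in $\bigcup_{|i| \leq L}\tau^i K$, the corresponding $k$-linear combination of $\tau^{j}c$ for $0\leq j\leq n-1$ represents $w$, with support in $\bigcup_{-L \leq i \leq L+n-1}\tau^i K$; applying $\tau$ then produces a new cycle for $z$ whose support is shifted one unit to the right. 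Iterating this substitution drives the left endpoint of the support to $+\infty$, yielding a representative inside $A_N$ for any $N$. Solving the same relation for the leading term (via $a_n \neq 0$) gives the symmetric statement for $-\infty$.

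For the converse, the key input is that $M$ has a finite CW structure, so $C_*(\tilde M;k)$ is a finitely generated free $\Lambda$-module and $H_q(\tilde M;k)$ is finitely generated over the PID $\Lambda$. The structure theorem then gives a decomposition $H_q(\tilde M;k) = F \oplus T$ with $F \cong \Lambda^r$ free and $T$ the torsion submodule. Let $I$ denote the image of $H_q(A_0;k)\to H_q(\tilde M;k)$; the deck action identifies the image of $H_q(A_N;k)$ with $\tau^N I$, so the movable-to-$+\infty$ classes are precisely $\bigcap_{N\geq 0}\tau^N I$. The chain module $C_*(A_0;k)$ is finitely generated over the subring $k[\tau]$ (spanned over $k[\tau]$ by the once-shifted cells $\tau e$ for $e$ a cell of $M$), so $I$ is finitely generated over $k[\tau]$ as well. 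Projecting to the free summand, $\pi_F(I)$ is a finitely generated $k[\tau]$-submodule of $\Lambda^r$, and a finite generating set has a bounded lowest $\tau$-exponent, say $-L$, giving $\pi_F(I) \subset \tau^{-L}k[\tau]^r$. Hence $\tau^N \pi_F(I) \subset \tau^{N-L}k[\tau]^r$, whose intersection over $N$ vanishes; since $\pi_F$ is a $\Lambda$-map it commutes with $\tau$, forcing $\bigcap_N \tau^N I \subset \ker \pi_F = T$. The mirror argument using $B_0 = \bigcup_{i \leq 0}\tau^i K$ and the subring $k[\tau^{-1}]$ handles movability to $-\infty$.

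The main obstacle will be the hard direction: the algebraic input $\bigcap_N \tau^N J = 0$ for finitely generated $k[\tau]$-submodules $J$ of $\Lambda^r$ is elementary in itself, but one must first identify $I$ as the right finitely generated $k[\tau]$-object, confirm the shift identification of the images of $H_q(A_N;k)$ under the deck action, and verify that the $\Lambda$-projection onto $F$ is compatible with the $k[\tau]$-structure on $I$. Once this bookkeeping is in place, the projection-onto-free-part argument converts movability into torsion and completes the proof.
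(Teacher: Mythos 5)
The paper does not actually prove this theorem: its ``proof'' is a citation to Farber [6, Theorem~10.20], so there is no in-paper argument to compare against. Your proposal supplies a genuine, essentially correct proof, and it is in the spirit of the standard one. The easy direction (torsion $\Rightarrow$ movable in both directions) via the relation $a_0 z = -\sum_{j\geq 1} a_j \tau^j z$, iterated to push the support of a representative cycle to the right (and the mirror relation solved for the top coefficient to push it left), is exactly the classical argument and is fine; note only that each substitution replaces the representative by a merely homologous cycle, which is all the definition requires. The hard direction is where your proof earns its keep: identifying the movable-to-$+\infty$ classes with $\bigcap_N \tau^N I$ for $I = \mathrm{im}\bigl(H_q(A_0;k)\to H_q(\tilde M;k)\bigr)$, and then killing the free part by projecting. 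Two points there deserve explicit care. First, $K$ should be taken to be the \emph{closure} of a set of chosen lifts of the cells of $M$ (a single fundamental domain of lifts need not be a subcomplex), so that each $A_N$ is a finite-type subcomplex and $C_*(A_0;k)$ is finitely generated over $k[\tau]$ --- it is generated by the cells of $\tau K$ but need not be free, since adjacent translates share boundary cells; finite generation plus Noetherianity of $k[\tau]$ is all you use, so this is harmless. Second, the independence of the definition from the choice of $K$ (asserted in the paper without proof) is implicitly used when you fix one convenient $K$; a sentence noting that any two admissible compact $K$, $K'$ satisfy $K \subset \bigcup_{|i|\leq m}\tau^i K'$ for some $m$ would close that loop. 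With those bookkeeping items in place, the projection-onto-the-free-summand argument ($\pi_F(I)\subset \tau^{-L}k[\tau]^r$, hence $\bigcap_N \tau^N\pi_F(I)=0$) is correct and complete.
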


\begin{proof}

   [6] See Theorem 10.20.

\end{proof}

$a\in k$ defines a local system as it appeared in the theory of Novikov homology. it is
The fiber is a local system of $k$,

\[\pi_1(M) \rightarrow \text{Aut}(k):g \mapsto a^{<[\omega],[g]>} \]
It was a local system corresponding to the monodromy expression. Isomorphism for tensor product of local systems for $a,b\in k^{*}$
\[a^{\xi}\otimes b^{\xi} \cong (ab)^{\xi} \]
exists, which makes the cup product

\[\cup:H^{p}(M,a^{\xi})\otimes H^{q}(M,b^{\xi}) \rightarrow H^{p+q}(M,( ab)^{\xi}) \]

is determined.

\begin{theorem}
   Meets the following
   \[u\in H^p(M,a^{\xi}) \quad v\in H^q(M,b^{\xi}) \quad w_j \in H^{p_j}(M,k )\quad (1\leq j \leq r) \]
   If exists $\cat(M,\xi)>r$

   \begin{itemize}
     \item $p,q,p_j>0$
     \item $u \cup v \cup w_1 \cup \cdots \cup w_r \in H^{p+q+p_1+\cdots +p_r}(M,(ab)^{[\omega]})$ is $0$ Not
     \item $a,b\in k^{*}$ is not included in Supp($M,[\omega]$)

   \end{itemize}
\end{theorem}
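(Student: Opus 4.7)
The plan is to mimic the classical cup-length lower bound for the Lusternik--Schnirelmann category, adapted to the closed 1-form setting where one of the open sets in a categorical cover is not required to be contractible but instead admits a homotopy making $\int \omega$ arbitrarily negative. Suppose for contradiction that $\cat(M,\xi)\le r$; then for each large integer $N$ there exists an open cover $M = U_1\cup\cdots\cup U_r\cup U$ with each $U_i$ null-homotopic in $M$ and $U$ satisfying the integral condition of Definition~\ref{defls}.

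First I would lift each $w_i$ to a relative class. Since $U_i$ is null-homotopic in $M$, the restriction $H^{p_i}(M;k)\to H^{p_i}(U_i;k)$ is zero, so the long exact sequence of the pair $(M,U_i)$ produces $\tilde{w}_i\in H^{p_i}(M,U_i;k)$ whose image is $w_i$. The more delicate lifts are $\tilde{u}\in H^p(M,U;a^{\xi})$ and $\tilde{v}\in H^q(M,U;b^{\xi})$: for these I would use the hypothesis $a,b\notin \text{Supp}(M,[\omega])$ together with the integral condition on $U$ to force the restrictions $H^p(M;a^{\xi})\to H^p(U;a^{\xi})$ and $H^q(M;b^{\xi})\to H^q(U;b^{\xi})$ to vanish. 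Geometrically, the homotopy $h_t$ pushes cycles of $U$ arbitrarily far down the cyclic cover $\tilde{M}$, so via the monodromy description $g\mapsto a^{\langle [\omega],[g]\rangle}$ the restricted local system becomes cohomologically trivial once $a$ avoids the support locus, which was characterized by torsion in the Novikov module $H_*(\tilde{M};k)$.

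With all factors lifted, I would form the relative cup product
\[ \tilde{u}\cup\tilde{v}\cup\tilde{w}_1\cup\cdots\cup\tilde{w}_r \in H^{p+q+p_1+\cdots+p_r}\bigl(M,\, U\cup U_1\cup\cdots\cup U_r;\,(ab)^{\xi}\bigr), \]
using the local system isomorphism $a^{\xi}\otimes b^{\xi}\cong (ab)^{\xi}$ recalled just before the theorem. Since the $U_i$ together with $U$ cover $M$, the target group is $H^{p+q+p_1+\cdots+p_r}(M,M;(ab)^{\xi})=0$, so the image $u\cup v\cup w_1\cup\cdots\cup w_r$ in absolute cohomology also vanishes by naturality of the map from relative to absolute cohomology, contradicting the hypothesis. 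Therefore $\cat(M,\xi)>r$.

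The main obstacle is the vanishing claim, namely that the restriction of $u$ to $H^p(U;a^{\xi})$ (and similarly for $v$) is zero once $N$ is taken large enough. This is the only step that genuinely uses both the geometric displacement property of $U$ and the algebraic support hypothesis, and it has no analogue in the classical cup-length proof; it is essentially the price one pays for admitting a non-contractible piece in the definition of $\cat(M,[\omega])$. Once it is in place, the remainder is formal manipulation of the long exact sequence of a pair and the multiplicative structure of cohomology with local coefficients.
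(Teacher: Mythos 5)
Your overall strategy (run the classical cup-length argument, lifting each factor to a relative class and landing the product in $H^{*}(M,M;(ab)^{\xi})=0$) is the right one, and your lifts of the $w_j$ rel the null-homotopic sets $U_j$ are fine. But there is a genuine gap at exactly the point you flag as ``the main obstacle'': you assert that the restrictions $H^p(M;a^{\xi})\to H^p(U;a^{\xi})$ and $H^q(M;b^{\xi})\to H^q(U;b^{\xi})$ vanish for the \emph{whole} moving set $U$, and you attempt to lift both $u$ and $v$ rel this single set. The lifting property actually available (the theorem proved just before this one in the paper) only applies to a subset $F$ contained in a compact set $K$ with $[\omega]\mid_{K}=0$: that hypothesis is what allows $K$ to be lifted to the cyclic cover $\tilde M$, gives $H_{*}(\tilde F;k)\cong H_{*}(F;k)\otimes_k\Lambda$, trivializes $a^{[\omega]}\mid_F$, and lets the Wang-type sequence $H_i(\tilde M;k)\xrightarrow{\tau-b}H_i(\tilde M;k)\xrightarrow{p_{*}}H_i(M;b^{[\omega]})$ kill the torsion. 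The moving set $U$ of a categorical cover does \emph{not} satisfy $[\omega]\mid_{U}=0$ in general --- it is precisely the region where the homotopy winds around nontrivial cycles of $[\omega]$ --- so your vanishing claim does not follow from the support hypothesis plus the displacement property as stated.

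The paper's proof repairs this by first writing $M=U'\cup V'$ with $[\omega]\mid_{\overline{U'}}=0$ and $[\omega]\mid_{\overline{V'}}=0$ (possible for an integral class, e.g.\ by pulling back two arcs covering $S^1$), and then splitting the moving set $F$ of the categorical cover as $F=(F\cap U')\cup(F\cap V')$. The lifting property applies separately to $F\cap U'$ and $F\cap V'$, producing $\tilde u\in H^p(M,F\cap U';a^{\xi})$ and $\tilde v\in H^q(M,F\cap V';b^{\xi})$; together with the $\tilde w_j$ these cover $M$ and the product dies in $H^{*}(M,M;(ab)^{\xi})$. This two-piece decomposition is not a technicality: it is the reason the theorem requires \emph{two} classes $u,v$ with nontrivial monodromy local systems rather than one. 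If your single-set vanishing were correct, the same argument would show $u\cup w_1\cup\cdots\cup w_r\neq 0$ already forces $\cat(M,\xi)>r$, a strictly stronger statement than Farber's theorem, which should have been a warning sign. To complete your proof you need to insert the decomposition of $M$ into two $[\omega]$-trivial pieces and carry out the lifting on each intersection with $F$ separately.
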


Supp($M,[\omega]$) is defined as follows.
$\tau:\tilde{M}\rightarrow \tilde{M}$ induces homomorphism between homologies. Therefore, $H_{*}(\tilde{M};k)$ is a $\Lambda:=k[\tau,\tau^{-1}]$ module.
Since $k[\tau,\tau^{-1}]$ is a Noetherian ring,
$H_{*}(\tilde{M};k)$ and its Torsion part

\[\textstyle{ T = \text{Tor}_{\Lambda}(H_{*}(\tilde{M;k})) }\]
is also a finitely generated $\Lambda$ module and a finite-dimensional $k$ vector space. This Torsion part corresponds to movable to $\pm \infty$.

The operation of multiplying $\tau$ is a $k-$ linear mapping of $T\rightarrow T$, and since there is an inverse mapping that multiplies $\tau^{-1}$, it becomes an isomorphism. Let Supp$(M,[\omega])\subset k^{*}$ be the set of all eigenvalues of this isomorphism. This is a finite set.

   Once you indicate the following Lifting Property, all you have to do is consider the cup product as in the case of the normal LS category. First, we show the following theorem.

   \begin{theorem}
     Let $M$ be a closed manifold and $[\omega]\in H^1(M,\Z)$. For $a\notin$\,Supp$(M,[\omega])$,
     Suppose there is a compact subset $K$ of $M$ such that $[\omega]\mid_{K}=0$. For any $F\subset K$ that satisfies the 
      for sufficiently large $N$

     \[H^p(M,F;a^{[\omega]}) \rightarrow H^p(M;a^{[\omega]}) \]

     is a surjection.

     This $N$ can be $N$ of Lemma .

     \begin{proof}
       If $[\omega]=0$, $F=\emptyset$, so the theorem holds.
       Therefore, $[\omega]\neq 0$.

       Let $f:\tilde{M}\rightarrow \R$ be a function that satisfies $p^{*}\omega = df, f(\tau x)= f(x)+1$. From $[\omega]\mid_K=0$, $K$ lifts to $\tilde{M}$. Fix one such lift and shift $f$ by a constant factor so that $f\mid_K$ has a value at $[0,c]$. Let $c\in \Z$.

       Take $N^{\prime}$ such that it satisfies the lemma for $K\subset \tilde{M}$. In other words, any cycle in $K\subset \tilde{M}$ is homologous to the cycle in $\bigcup_{j \leq -N^{\prime}} \tau^j K$. In this case, the cycle becomes the twist source of $H_{*}(\tilde{M};k)$.

       If $F\subset K$ satisfies the definition  for $N>N^{\prime}$, then

       \[ H_{*}(\tilde{F};k) \rightarrow H_{*}(\tilde{M};k) \]
The image becomes the source of the twist. Since $\tilde{F}$ is an infinite number of copies of $F$, it becomes $H_{*}(\tilde{F};k) \cong H_{*}(F;k)\otimes_k \Lambda$.

       The theoremasserts that if we consider a complete sequence of space pairs,

       \[ H^{*}(M;a^{[\omega]}) \rightarrow H^{*}(F;a^{[\omega]}) \]
This is equivalent to the fact that becomes a zero map. Considering the duality with homology, this

       \[ H_{*}(F;a^{-[\omega]}) \rightarrow H_{*}(M;a^{-[\omega]}) \]
This is equivalent to the fact that becomes a zero map. Since $[\omega]\mid_F=0$, $a^{[\omega]}\mid_F \cong k$. Therefore

       \[ H_{*}(F;k) \rightarrow H_{*}(M;a^{-[\omega]}) \]
It suffices to show that is a zero map.

       inclusion $F\rightarrow M$ is the same as $F \xrightarrow{\subset} \tilde{M} \xrightarrow{p} M$.
       We know that $\tilde{F}$ is movable to the image of the mapping above $-\infty$ is included in $\Lambda-$torsion. Therefore
       \begin{eqnarray}
        \textstyle{p_{*}(T) = 0 ,\quad T=\text{Tor}_{\Lambda}(H_{*}(\tilde{M};k))}
      \end{eqnarray}
       If we show that, we have shown the theorem. well-known complete sequence

       \[ \cdots \rightarrow H_i(\tilde{M};k) \xrightarrow{\tau-b} H_i(\tilde{M};k) \xrightarrow{p_{*}} H_i(M;b^{[\omega]}) \rightarrow \cdots \]
think of. Here $b=a^{-1}$. $\tau-b:T\rightarrow T$ is isomorphic from $a\notin \text{Supp}(M,[\omega]) $. Therefore, the set of torsion elements of $H_{*}(\tilde{M};k)$ is the same as the image of $\tau-b$, so from the complete sequence above, $p_{*}(T)=0$
\end{proof}

     (Proof of theorem)

     You can set $[\omega]\neq 0$. Assume $\cat(M,[\omega])\leq r$. In this case, it becomes a contradiction to show that the $r+2$ cup product, as assumed in the theorem, is always 0. $M$ can be divided into two parts such that $[\omega]\in H^{1}(X;\Z)$ is trivial.

     \[ M=U\cup V,\quad [\omega]\mid_{\overline{U}}=0,\quad [\omega]\mid_{\overline{V}}=0 \]

     There exists an open set $U,V$ such that For $\overline{U},\overline{V}$, take a sufficiently large $N$ that satisfies the lemma . From $\cat(M,[\omega]) \leq r$, the open coverage $M=F\cup F_1\cup \cdots \cup F_r$ and the integral value is uniform for any $N>0$ There is a homotopy that changes $F$ so that it is less than or equal to $-N$. Then, for $F\cap U$, $u\in H^q(M;a^{[\omega]})$ is $H^q(M,F\cap U Lift to ;a^{[\omega]})$. The same goes for $F\cap V$. From the result of LS category, the element of $w_j\in H^{d_j}(M;k)$ is also lifted to $H^{d_j}(M;F_j;k)$, so

     \[ \tilde{u}\cup \tilde{v}\cup \tilde{w}_1 \cup \cdots \cup \tilde{w}_r \in H^{*}(M,M;(ab)^ {[\omega]}) = 0 \]

     It becomes. Therefore, it must be $\cat(M,[\omega])>r$.

   \end{theorem}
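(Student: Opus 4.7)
The plan is to turn the surjectivity of $H^p(M,F;a^{[\omega]}) \to H^p(M;a^{[\omega]})$ into a vanishing statement via the long exact sequence of the pair $(M,F)$, and then transport everything to the cyclic cover where the hypothesis $a \notin \text{Supp}(M,[\omega])$ can be exploited. The displayed map is surjective if and only if the restriction $H^p(M;a^{[\omega]}) \to H^p(F;a^{[\omega]})$ is zero. Applying Poincar\'e--Lefschetz duality with local coefficients on the closed manifold $M$, and using that $a^{[\omega]}\mid_F$ is trivial (because $[\omega]\mid_K=0$), this is in turn equivalent to the pushforward $H_*(F;k) \to H_*(M;a^{-[\omega]})$ being zero.

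The geometric core is a factorisation through the cyclic cover. Since $[\omega]\mid_K=0$, I can fix a lift $\tilde{K}\subset\tilde{M}$ of $K$, and hence a lift $\tilde{F}$ of $F$, so that the inclusion factors as $F\cong\tilde{F}\hookrightarrow\tilde{M}\xrightarrow{p}M$. The key claim is then that the image of $H_*(\tilde{F};k)\to H_*(\tilde{M};k)$ lies inside the $\Lambda$-torsion submodule $T := \text{Tor}_\Lambda(H_*(\tilde{M};k))$. This is precisely what the movability hypothesis on $F$ supplies: with $N$ as in the preceding lemma, any cycle in $\tilde{F}$ is homologous in $\tilde{M}$ to a cycle supported in $\bigcup_{j\le -N}\tau^j K$, and by the theorem equating movability to $\pm\infty$ with $\Lambda$-torsion, its class must be torsion.

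To conclude, I would invoke the Wang-type long exact sequence
\[ \cdots \to H_i(\tilde{M};k) \xrightarrow{\tau - b} H_i(\tilde{M};k) \xrightarrow{p_*} H_i(M;b^{[\omega]}) \to \cdots \]
with $b=a^{-1}$, which realises the target homology as the cokernel of $\tau-b$. Because $\text{Supp}(M,[\omega])$ is by definition the set of eigenvalues of $\tau$ acting on $T$, the hypothesis $a\notin\text{Supp}$ makes $\tau-b$ restrict to an automorphism of $T$, so exactness forces $p_*(T)=0$. Combined with the preceding step this gives the required vanishing, and hence the surjection.

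The main obstacle I expect is notational rather than geometric: keeping the various local systems and shifts ($a^{[\omega]}$ versus $a^{-[\omega]}$ after duality, and $\tau-a$ versus $\tau-a^{-1}$ in the Wang sequence) aligned so that the single hypothesis $a\notin\text{Supp}(M,[\omega])$ is really what makes $\tau-b$ invertible on $T$. A secondary subtlety is checking that the homology which witnesses movability of $F$ genuinely lives on the chosen lift $\tilde{F}$, so that the ``image is torsion'' claim is applied to the correct copy of $F$ inside the cover.
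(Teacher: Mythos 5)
Your proposal follows essentially the same route as the paper's own proof: reduce surjectivity to the vanishing of the restriction via the long exact sequence of the pair, dualize to a homological pushforward statement using the triviality of $a^{[\omega]}\mid_F$, factor the inclusion through the chosen lift $\tilde{F}\subset\tilde{M}$ so that movability places the image inside the $\Lambda$-torsion $T$, and kill $T$ under $p_*$ via the Wang sequence using that $\tau-a^{-1}$ is an automorphism of $T$ when $a\notin\mathrm{Supp}(M,[\omega])$. The two subtleties you flag (the $a$ versus $a^{-1}$ bookkeeping and the choice of lift) are real but are handled the same way in the paper, so no change of strategy is needed.
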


\section{Generalization of $\cat(M,[\omega])$ to non compact manifolds}

In this section, we generalize $\cat(M,[\omega])$ introduced by Farber to non-compact manifolds. The reason $\cat(M,[\omega])$ was defined only for compact manifolds and finite CW complexes is that for only compact manifolds or CW complexes $M$, $\cat(M,[\omega])$ does not depend on how to take a equivalence class of $[\omega]$. $\cat(X,\omega)$, which will be introduced as follow, gives integers for manifold $X$ and closed 1-form $\omega$, and even if they are in the same cohomology class of closed 1-forms The value can be different depending on how the closed 1-form is taken. Therefore, $[\quad]$ of $[\omega]$ is removed and it is written as $\cat(X,\omega)$.
This difference in $\cat(X,\omega)$ within the same cohomology class is due to the difference in behavior at infinity, which does not occur in a compact manifold.
Therefore, if $\omega$ and $\omega^{\prime}$ are the same cohomology on a compact manifold, $\cat(X,\omega) = \cat(X,\omega^{\prime})$.

Let the compact manifold be written as $M$ and the non compact manifold as $X$.

\subsection{Definition of $\cat(X,\omega)$}

\begin{definition}For a manifold $X$ and a closed 1-form $\omega$, let $\cat(X,\omega)$ be the smallest integer $k\geq 0$ that satisfies the following. 

\begin{itemize}
   \item \text{(1)} For any positive integer $N$, there exists an open cover of $X$, $X=U \cup U_1 \cup \dots \cup U_k$ such that satisfies the following.

   \item \text{(2)} Each $U_i$ is null-homotopic in $X$.

   \item \text{(3)} There is smooth homotopy $h_t:U\times [0,1] \rightarrow X$ such that $h_0$ is inclusion and for any $x\in U$, $\int _{\gamma_x}\omega \leq -N$. Where $\gamma_x$ represents the line integral along the homotopy.

\end{itemize}

\end{definition}

\begin{proposition}
   For any bounded $C^{\infty}$ function $f$ on closed 1-form $\omega$ and $X$, $cat(X,\omega)=cat(X,\omega + df)$
\end{proposition}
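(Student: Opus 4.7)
The plan is to exploit the fact that conditions (1) and (2) in the definition of $\cat(X,\omega)$ make no reference to the 1-form at all — only condition (3), the line-integral condition on the homotopy over $U$, depends on $\omega$. So the same open covering $X = U \cup U_1 \cup \cdots \cup U_k$ and the same homotopy $h_t$ that work for $\omega$ will be candidates for $\omega + df$; only the numerical bound on the integral will shift, and the shift is controlled because $f$ is bounded.

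Concretely, I would set $C = \sup_{x \in X} |f(x)| < \infty$ and make the elementary observation that for any path $\gamma_x$ obtained from the homotopy $h_t$, Stokes' theorem gives
\[
\int_{\gamma_x}(\omega + df) \;=\; \int_{\gamma_x}\omega \;+\; f(h_1(x)) - f(h_0(x)) \;\leq\; \int_{\gamma_x}\omega + 2C.
\]
To prove $\cat(X,\omega+df) \leq \cat(X,\omega)$, let $k = \cat(X,\omega)$ and fix an arbitrary $N' > 0$. Apply the definition for $\omega$ with the choice $N := N' + 2C$ to obtain an open covering $X = U \cup U_1 \cup \cdots \cup U_k$ and a homotopy $h_t$ on $U$ with $\int_{\gamma_x}\omega \leq -N$ for every $x \in U$. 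The $U_i$ remain null-homotopic (that condition is 1-form-free), and by the displayed inequality,
\[
\int_{\gamma_x}(\omega + df) \;\leq\; -N + 2C \;=\; -N',
\]
so the same data verify the definition of $\cat(X,\omega + df)$ for the chosen $N'$. Since $N'$ was arbitrary, $\cat(X,\omega+df) \leq k$.

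For the reverse inequality, I would simply observe that $\omega = (\omega + df) + d(-f)$ and that $-f$ is again a bounded $C^\infty$ function with the same supremum $C$, so the argument just given, applied with $\omega + df$ in place of $\omega$ and $-f$ in place of $f$, yields $\cat(X,\omega) \leq \cat(X,\omega + df)$. Combining the two inequalities finishes the proof.

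I do not expect any serious obstacle: the only thing to be careful about is that the boundedness of $f$ is essential, since it is precisely what lets a single shift $N \mapsto N + 2C$ absorb the discrepancy uniformly in $x$; if $f$ were only locally bounded, the argument would break because the constant $C$ could depend on the sheet of $U$ chosen and the bound $-N'$ could not be made uniform. The symmetry step is the only place where one might worry about circularity, but it is clean because the bound $C$ is the same for $f$ and $-f$.
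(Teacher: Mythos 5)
Your proposal is correct and follows essentially the same route as the paper: bound the discrepancy of the line integrals by $2C$ where $C=\sup|f|$, absorb it by running the definition for $\omega$ at the shifted threshold $N'+2C$, and obtain the reverse inequality by symmetry since $-f$ is also bounded. Your write-up is somewhat more careful than the paper's (making the Stokes computation and the symmetry step explicit), but there is no substantive difference.
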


\begin{proof}
    For any line integral over $\gamma$, if $|f|\leq C$
    \begin{eqnarray*}
      \left| \int_{\gamma}\omega - \int_{\gamma}\omega + df \right| = |f(\gamma(1)) - f(\gamma(0))| \leq 2C
    \end{eqnarray*}

    From this, open coverage and homotopy for $\omega$ that satisfy the conditions for $N^{\prime}\geq N + 2C$ also satisfy the conditions for $N$ of $\omega + df$. 
    Therefore $cat(X,\omega + df)\leq cat(X,\omega)$. The reverse is also true, so $cat(X,\omega) = cat(X,\omega+df)$
\end{proof}

In particular, the following facts are clear.

\begin{corollary}
   If $\omega,\omega^{\prime}\in \Omega^1_c(X)$ is $[\omega]=[\omega^{\prime}] \ \text{in} \ H_c ^1(X)$, then $\cat(X,\omega)=\cat(X,\omega^{\prime})$.
\end{corollary}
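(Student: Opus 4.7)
The plan is to read the corollary as an immediate consequence of the preceding proposition, so the whole job is to unpack the hypothesis $[\omega]=[\omega']$ in $H^1_c(X)$ and check that the resulting coboundary is bounded.

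First I would recall the definition of compactly supported de Rham cohomology: the complex is $(\Omega^{*}_c(X),d)$, so $H^1_c(X) = \ker(d:\Omega^1_c\to\Omega^2_c)/d(\Omega^0_c)$. The hypothesis $[\omega]=[\omega']$ in $H^1_c(X)$ therefore produces a compactly supported smooth function $f\in \Omega^0_c(X)=C^\infty_c(X)$ such that $\omega'=\omega+df$. This is the only nontrivial step in the argument; it is purely a restatement of the definition of $H^1_c$.

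Next I would verify that $f$ is bounded, which is the hypothesis needed to invoke the Proposition. Since $f$ is smooth and has compact support $K\subset X$, it is continuous on the compact set $K$ and identically zero on $X\setminus K$, hence $\sup_{X}|f|=\sup_{K}|f|<\infty$. So $f$ is a bounded $C^\infty$ function on $X$.

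Finally I would apply the Proposition directly: $\cat(X,\omega)=\cat(X,\omega+df)=\cat(X,\omega')$. There is essentially no obstacle here; the only thing to be careful about is not to mistake $H^1_c(X)$ for ordinary de Rham $H^1(X)$, since in the latter case the primitive $f$ of $\omega'-\omega$ need not be bounded on a noncompact $X$ and the proposition would not apply, which is exactly the reason the corollary is phrased with compactly supported cohomology.
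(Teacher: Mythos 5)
Your proposal is correct and follows exactly the route the paper intends: the paper states the corollary as an immediate consequence of the preceding proposition, noting that a coboundary in $H^1_c(X)$ is $df$ for a compactly supported (hence bounded) smooth $f$. Your only addition is to spell out the definition of $H^1_c$ and the boundedness check, which the paper leaves implicit.
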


Since any $C^{\infty}$ function is bounded on a compact manifold, $cat(X,\omega)$ is an invariant that does not depend on how to take equivalence classes.
In the non compact case, $\omega,\omega+df$ have the same value for bounded function $f$,
In particular, it is an invariant for compact support cohomology.

\subsection{Homotopy invariance}

Discuss the homotopy invariance of $\cat(X,\omega)$.
We also prove the homotopy invariance of $\cat(M,[\omega])$ for the compact manifold which was skipped in the previous section.

In a compact manifold, it is invariant to any homotopy, but in the non-compact case, homotopy requires certain conditions for the integral over $\omega$.

\begin{lemma}\label{lemhomo}
Suppose that the manifolds $X$ and $Y$ are homotopy equivalent.
If the smooth homotopy maps $\phi:X\rightarrow Y,\psi:Y\rightarrow X$ satisfies the following, then $cat(X,\omega)=cat(Y,\psi^{*}\omega )$.

\begin{itemize}
   \item $\psi \circ \phi :X\rightarrow X$ and $\phi \circ \psi :Y\rightarrow Y$ each have a homotopy between them and the identity map. And the line integral of closed 1-form $\omega, \psi^{*}\omega$ along its homotopy starting from any point in $X,Y$ is bounded.
\end{itemize}
In particular, if $X,Y$ are compact, the conditions regarding line integrals are always satisfied.
So $cat(X,\omega)=cat(Y,\psi^{*}\omega)$.
\end{lemma}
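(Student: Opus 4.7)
The plan is to establish both inequalities $\cat(X,\omega)\le\cat(Y,\psi^{*}\omega)$ and $\cat(Y,\psi^{*}\omega)\le\cat(X,\omega)$ by transporting categorical covers across $\phi$ and $\psi$, absorbing all errors from the line-integral hypothesis into the choice of parameter. Let $H^{X}:X\times[0,1]\to X$ and $H^{Y}:Y\times[0,1]\to Y$ be the given smooth homotopies from the identities to $\psi\circ\phi$ and $\phi\circ\psi$ respectively. Fix $C>0$ that uniformly bounds, in absolute value, the line integrals of $\omega$ along $t\mapsto H^{X}(x,t)$ for every $x\in X$, and of $\psi^{*}\omega$ along $t\mapsto H^{Y}(y,t)$ for every $y\in Y$. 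All subsequent bookkeeping is designed to absorb $C$ into $N$.

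For $\cat(X,\omega)\le\cat(Y,\psi^{*}\omega)=k$, given any $N$, take a categorical cover $Y=V\cup V_{1}\cup\cdots\cup V_{k}$ for $\psi^{*}\omega$ with parameter $N+C$ and define $U_{i}=\phi^{-1}(V_{i})$, $U=\phi^{-1}(V)$. Each $U_{i}$ is null-homotopic in $X$ because $U_{i}\hookrightarrow X$ is homotopic via $H^{X}$ to $\psi\circ\phi|_{U_{i}}$, which factors as $\psi$ applied to a null-homotopy of $V_{i}$ in $Y$. For $U$, build the required homotopy by concatenating $H^{X}$ (sending $x\in U$ to $\psi\phi(x)$) with $\psi$ applied to the given $V$-homotopy of $\phi(x)$. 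The first leg contributes at most $C$ to $\int\omega$, while the second leg equals the line integral of $\psi^{*}\omega$ along the $V$-homotopy and is therefore $\le-(N+C)$, so the total is $\le-N$.

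For the reverse direction, take a cover $X=U\cup U_{1}\cup\cdots\cup U_{k}$ for $\omega$ with parameter $N+3C$ and set $V_{i}=\psi^{-1}(U_{i})$, $V=\psi^{-1}(U)$. The null-homotopy of each $V_{i}$ in $Y$ is symmetric. For $V$ concatenate $H^{Y}$ with $\phi$ applied to the $U$-homotopy of $\psi(y)$; the analytic subtlety is in the second leg. Writing $\alpha(t):=h_{t}(\psi(y))$, one has
\[\int_{0}^{1}\psi^{*}\omega\cdot\partial_{t}\phi(\alpha(t))\,dt=\int_{\alpha}(\psi\circ\phi)^{*}\omega,\]
which is an integral of $(\psi\phi)^{*}\omega$ rather than of $\omega$. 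To convert, apply Stokes' theorem to the closed form $\omega$ on the smooth rectangle $(s,t)\mapsto H^{X}(\alpha(s),t)$ to obtain
\[\int_{\alpha}(\psi\circ\phi)^{*}\omega=\int_{\alpha}\omega-\int_{H^{X}(\alpha(0),\cdot)}\omega+\int_{H^{X}(\alpha(1),\cdot)}\omega,\]
so this leg differs from $\int_{\alpha}\omega\le-(N+3C)$ by at most $2C$. Adding the $H^{Y}$ leg contributes at most one more $C$, giving total $\le-N$.

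The main obstacle is exactly this Stokes correction in the reverse direction: transporting a categorical cover by $\psi$ naturally produces integrals of $(\psi\phi)^{*}\omega$ on $X$, and the closedness of $\omega$ together with the boundedness hypothesis on the homotopy line integrals is precisely what is needed to reduce these to integrals of $\omega$ at bounded cost. The compact case follows automatically: any smooth function on a compact manifold is bounded, so the constant $C$ exists without further hypothesis, recovering the expected homotopy invariance of $\cat(M,[\omega])$.
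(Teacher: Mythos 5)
Your proof is correct and follows the same overall strategy as the paper's: pull the categorical cover back through $\phi$ (resp.\ $\psi$), null-homotope the $U_i$ by composing with the homotopy to the identity, and build the homotopy on the exceptional set by concatenating $H^{X}$ (resp.\ $H^{Y}$) with the transported deformation, having enlarged the parameter $N$ in advance to absorb the constant $C$. The one place where you genuinely go beyond the paper is the reverse inequality: the paper's displayed computation simply replaces $\int_{\gamma}\phi^{*}\psi^{*}\omega$ by $\int_{\gamma}\omega$ as if $(\psi\circ\phi)^{*}\omega=\omega$, which is false in general; your Stokes argument on the rectangle $(s,t)\mapsto H^{X}(\alpha(s),t)$, using the closedness of $\omega$ and the hypothesis bounding the integrals along the homotopy tracks at the two endpoints, is exactly what is needed to justify that substitution up to an error of $2C$, at the price of starting from parameter $N+3C$ rather than $N+C$. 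In that respect your write-up repairs a gap in the paper's own argument rather than merely reproducing it, while the forward direction (where the transported leg is literally $\psi$ applied to a path in $Y$, so no correction is needed) coincides with the paper's.
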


\begin{proof}
   $\phi \circ \psi :X\rightarrow X$. Let $r_t$ be the homotopy between $\phi \circ \psi$ and the identity map. By assumption, the line integral along the homotopy is bounded for any $x\in X$, so there exists some $C>0$ such that $|\int_{r_t(x)} \omega| < C$ be. Assume $cat(Y,\psi^{*}\omega)\leq k$.
   Then, for any $N>0$, there exists an open neighborhood $V\cup V_1 \dots V_k $ of $Y$, which is the condition of $\cat$.
   Assume that there exists a homotopy such that the line integral of $\phi^{*}\omega$ is less than or equal to $-N-C$ with homotopy $h_t:V\rightarrow Y$. Define $X=U\cup U_1 \dots U_k$ as $U=\psi^{-1}(V),U_i=\psi^{-1}(V_i)$. $U_i$ are collapsible in $X$. Show that there exists a homotopy $h^{\prime}_t :U\rightarrow X$ such that the line integral is uniformly less than or equal to $-N$. Homotopy $h^{\prime}_t:U\rightarrow X$

   \begin{eqnarray}
     h^{\prime}_t =
     \begin{cases}
       r_{2t}(x) & (2t\leq1)\\
       \psi(h_{2t-1}(\phi(x))) & (2t\geq1)
     \end{cases}
   \end{eqnarray}
It is defined as Then at any $x\in U$

\begin{eqnarray}
   \int_{h^{\prime}_t(x)} \omega = \int_{r_{2t}(x)} \omega + \int_{h_{2t-1}(\phi(x))} \psi ^{*}\omega \leq -N
\end{eqnarray}
become. Since this is a homotopy that makes the line integral of $U$ uniformly less than $-N$, it becomes $cat(X,\omega)\leq \cat(Y,\psi^{*}\omega)$ Recognize. $\cat(X,\omega) \geq shows \cat(Y,\psi^{*}\omega)$. Let $\cat(X,\omega)\leq k$. In this case, similarly, the open coverage $X=U\cup U_1\dots U_k$, $U_i$ is contractible, and the line integral of $\omega$ is uniformly $-N$ with the homotopy $g_t:U\rightarrow X$. There is something that makes it less than $N-C$. If $V:=\psi^{-1}(U),V_i:=\psi^{-1}(U_i)$, then $V_i$ is contractible.
$\psi \circ \phi:Y \rightarrow$ Let $r^{\prime\prime}$ be the homotopy that connects $Y$ and the identity map.
Homotopy $h^{\prime\prime}:V\rightarrow Y$

\begin{eqnarray}
   h^{\prime \prime}_t :=\begin{cases}
   r^{\prime\prime}_{2t}(x) & (2t\leq1)\\
   \phi(g_{2t-1}(\psi(x))) & (2t\geq1)
\end{cases}
\end{eqnarray}
It is defined as In this case, the line integral along the homotopy is

\begin{eqnarray}
   \int_{h^{\prime\prime}_t(x)}\psi^{*}\omega &=& \int_{r^{\prime\prime}_t(x)}\psi^{*} \omega + \int_{g_{2t-1}(\psi(x))} \phi^{*}\psi^{*} \omega \\
  &=& \int_{r^{\prime\prime}_t(x)}\psi^{*}\omega + \int_{g_{2t-1}(\psi(x))} \omega \leq - N
\end{eqnarray}
and $V,V_1,\dots,V_k$ and $h^{\prime\prime}_t$ satisfy the condition when category $-N$, so $cat(X,\omega)\geq It becomes cat(Y,\psi^{*}\omega)$. From this, we know that $cat(X,\omega)=cat(Y,\psi^{*}\omega)$.
\end{proof}
It is expected that $\cat(X,\omega)$ will be extended to general CW complexes. In that case, it is more convenient to define it as a homotopy invariant using continuous homotopy.
For two manifolds, the existence of a smooth homotopy equivalence map is equivalent to the existence of a continuous homotopy equivalence mapping ([8]Proposition 17.8). In this way, there is no difference between continuous and smooth homotopy among manifolds, and since we do not extend it to general CW complexes in this paper, we define only smooth homotopy.

\begin{corollary}
   Let $X,Y$ be homotopy-equivalent manifolds with smooth proper homotopy.
   In other words, for a proper and smooth map $\phi:X\rightarrow Y,\psi:Y\rightarrow X$, $\phi\circ\psi, \psi\circ\phi$ are homotopic to the identity map and the homotopy is proper. Proper homotopic means that a proper map $F:X\times [0,1]\rightarrow X$, $\psi\circ$ connects $\phi\circ\psi$ and the identity map $id\mid_X$. Homotopy $G:Y\times [0,1]\rightarrow Y$ connecting $\phi$ and identity map $id\mid_Y$. This means that both times $[0,1]\rightarrow Y$ are proper. In this case, the support on $Y$ is a compact closed 1-form $\omega$

   \[ \cat(X,\phi^{*}\omega) = \cat(Y,\omega) \]
   holds true.

\end{corollary}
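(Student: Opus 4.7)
The plan is to apply Lemma~\ref{lemhomo} with the roles of source and target exchanged, so that the lemma's conclusion ``$\cat(X,\omega) = \cat(Y,\psi^{*}\omega)$'' reads, after substitution, as the desired ``$\cat(Y,\omega) = \cat(X,\phi^{*}\omega)$''. Under this substitution, the form called $\omega$ in the lemma plays the role of our $\omega$ on $Y$, and the lemma's $\psi^{*}\omega$ plays the role of our $\phi^{*}\omega$ on $X$, since the lemma's $\psi$ is our $\phi$ after the swap. The nontrivial content is then the verification of the lemma's boundedness hypothesis for line integrals along the two proper homotopies $F$ and $G$.

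First I would record that $\phi^{*}\omega$ is compactly supported on $X$: since $\operatorname{supp}(\omega)\subset Y$ is compact and $\phi$ is proper, $\operatorname{supp}(\phi^{*}\omega)\subset \phi^{-1}(\operatorname{supp}(\omega))$ is compact as well. This makes the two boundedness checks symmetric, and it suffices to carry out one of them in detail.

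For the homotopy $G:Y\times[0,1]\to Y$ and the form $\omega$, set $K=\operatorname{supp}(\omega)$. Because $G$ is proper and $K$ is compact, $G^{-1}(K)\subset Y\times[0,1]$ is compact, so its projection $A\subset Y$ onto the first factor is also compact. For any $y\notin A$, the path $t\mapsto G(y,t)$ never enters $K$, so the line integral of $\omega$ along it is zero. For $y\in A$, the entire path lies in the compact set $G(A\times[0,1])$, and the function $y\mapsto\int_0^1 \omega(\partial_t G(y,t))\,dt$ is continuous on $A$ by smoothness of $G$, hence uniformly bounded on $A$ and therefore on $Y$. The identical argument, with $\phi^{*}\omega$ and $F$ in place of $\omega$ and $G$, delivers the bound for the other integral.

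With both boundedness hypotheses verified, Lemma~\ref{lemhomo} applies and yields the stated equality. I expect the main obstacle to be bookkeeping rather than analysis: one must carefully match the maps so that the lemma's ``$\psi^{*}\omega$'' is read as our $\phi^{*}\omega$, not as a pullback in the other direction. The analytic core is merely that properness confines the nonzero contributions of each line integral to a compact region, on which continuity furnishes the uniform bound that Lemma~\ref{lemhomo} requires.
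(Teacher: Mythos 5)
Your proposal is correct and follows essentially the same route as the paper: both verify the boundedness hypothesis of Lemma~\ref{lemhomo} by using properness of the homotopy to confine the nonzero part of the integrand to the compact set $G^{-1}(\operatorname{supp}(\omega))$ (resp.\ $F^{-1}(\operatorname{supp}(\phi^{*}\omega))$), on which continuity gives a uniform bound. Your case split over the projection $A$ and the remark that $\phi^{*}\omega$ has compact support are just slightly more explicit versions of the same compactness argument the paper uses.
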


A continuous map is called proper if the inverse image of any compact set is also compact. The reason for considering a proper map is that while the closed 1-form $\omega$ on $Y$ has a compact support, $\phi^{*}\omega$ also has a closed 1-form $\omega$ on $X$, where the support is compact.

\begin{proof}
   For proper map $\phi.\psi$, find the proper homotopy connecting $\phi\circ\psi$ and $id_Y$.

   \[ F:Y\times [0,1] \rightarrow Y, F\mid_0 = id_Y,F\mid_1 = \phi\circ\psi \]
   Suppose that on $Y$

   \begin{eqnarray}\label{eqntan}
    \int_{\gamma_t(y)} \omega
  \end{eqnarray}
   Show that is uniformly bounded.

   For each point $y\in Y$ of $Y$, the line integral is

   \[\int^1_0 \omega\left( \frac{d\gamma_{y,t}}{dt} \right) dt \]
It is. Since $F$ is proper, $F^{-1}(\text{supp}(\omega))$ is a compact set of $Y\times [0,1]$. Next $F^{-1}(\text{supp}(\omega))\rightarrow \R$

\[ (x,t)\mapsto \omega\left( {\frac{d\gamma_{y,t}}{dt}} \right)
   \]
is bounded above because its domain is compact. Therefore, (\ref{eqntan}) is uniformly bounded on $Y$.

The same holds true for the homotopy connecting $X$, $id\mid_X$, and $\psi\circ\phi$, so from the lemma  $\cat(X,\phi^{*}\omega)=\cat(Y,\omega)$.

\end{proof}

\subsection{The exsistence of Homoclinic cycles}
\indent
$\cat(M,[\omega])$, extended to closed 1-form does not give a lower bound of the number of zeros of $\omega$. But if the number of zeros of $\omega$ is less than $cat(M,[\omega])$, then there is the following very interesting result regarding the dynamical system of the gradient flow.

\begin{theorem}\label{themhatanzawa}
   If the number of zeros of a closed 1-form $\omega$ on a closed manifold $M$ is less than $\cat(M,[\omega])$, then the any gradient flows of $\omega$ for any Riemannian metric has at least one homoclinic cycle.
\end{theorem}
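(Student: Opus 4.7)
The plan is to argue by contradiction. Assume $\omega$ has zeros $p_1, \ldots, p_n$ with $n < \cat(M,[\omega])$, and suppose the (negative) gradient flow $\phi_t$ of $\omega$ with respect to some chosen Riemannian metric admits no homoclinic cycle. I will exhibit, for every $N > 0$, an open cover of $M$ of the form $M = U \cup U_1 \cup \cdots \cup U_n$ satisfying the conditions of Definition \ref{defls}, contradicting the minimality in the definition of $\cat(M,[\omega])$.

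Around each zero $p_i$ choose a small open ball $B_i$, contractible in $M$, with the closures $\overline{B_i}$ pairwise disjoint. Let $U_i \subset M$ be the set of points $x$ whose forward trajectory $\phi_t(x)$ enters $B_i$ at some finite time and remains inside $B_i$ for all later times. Continuous dependence of $\phi_t$ on initial data makes each $U_i$ open, and each $U_i$ is null-homotopic in $M$, because the flow followed by a contraction of $B_i$ gives a homotopy $U_i \to \{p_i\}$ inside $M$.

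Set $U = M \setminus (\overline{U_1} \cup \cdots \cup \overline{U_n})$. The key analytic claim is: for every $N > 0$ there exists $T = T(N) > 0$ such that $\int_0^T \omega\!\left(\tfrac{d}{ds}\phi_s(x)\right) ds \leq -N$ uniformly in $x \in U$. Given this, the homotopy $h_t(x) = \phi_{tT}(x)$ on $U$ produces line integrals uniformly bounded by $-N$, and $\{U, U_1, \ldots, U_n\}$ witnesses $\cat(M,[\omega]) \leq n$, contradicting the hypothesis $n < \cat(M,[\omega])$. To establish the claim, use that $\int \omega$ is monotonically nonincreasing along the flow, that no trajectory starting in $U$ can converge to any $p_i$ (else $x$ would lie in some $U_i$), and that any sequence $x_k \in U$ and times $T_k \to \infty$ along which the integral stays bounded gives, by compactness of $M$ and hyperbolic estimates near each $p_i$, a limiting chain of heteroclinic orbits closing up into a homoclinic cycle, contrary to assumption.

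The principal obstacle is precisely this last compactness/limit argument: turning the failure of uniform decay of $\int\omega$ on $U$ into an actual homoclinic cycle. One must control the time a non-captured orbit can linger in a neighborhood of a zero in terms of its distance from the stable manifold of that zero, then extract limits along pieces of trajectories that successively approach and leave neighborhoods of the $p_i$'s, and finally stitch these limit arcs together into a cyclic heteroclinic chain. All other ingredients — null-homotopy of the basins $U_i$, the flow homotopy on $U$, and the reformulation as a categorical cover in the sense of Definition \ref{defls} — are standard.
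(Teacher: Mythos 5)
Your overall scheme---contradiction, a cover by basin-like sets around the zeros plus one set on which the flow drives the integral below $-N$, and a limit argument turning failure of decay into a homoclinic cycle---is the same as the paper's (the paper defers the compact case to [6] but proves the non-compact generalization, Theorem \ref{thmmain}, by exactly this strategy). However, three of your steps fail as stated. First, the uniform-time claim is false: a point $x\in U$ arbitrarily close to $\overline{U_i}$ lingers near $p_i$ for an arbitrarily long time before its integral decreases appreciably, so no single $T(N)$ with $h_t(x)=\phi_{tT}(x)$ can work on all of $U$. The definition of $\cat(M,[\omega])$ only requires the integral along each homotopy path to be $\leq -N$, and the paper achieves this with a point-dependent stopping time by defining $U$ as the set of $x$ for which $\int_{x\cdot[0,t_x]}\omega=-N$ for some finite $t_x$, and $U_i$ as the set of points entering $\mathrm{Int}(B_i)$ \emph{before} their integral reaches $-N$ --- not as basins and their complement. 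Second, with $U=M\setminus(\overline{U_1}\cup\cdots\cup\overline{U_n})$ your sets need not cover $M$: a point of $\partial U_i$ whose orbit is captured by no $B_j$ lies in none of your sets. Third, ``the flow followed by a contraction of $B_i$'' is not automatically a continuous homotopy: for a generic small ball the first entry time is discontinuous (orbits can graze $\partial B_i$), and your condition ``remains inside for all later times'' is not obviously open. The paper's first lemma in the proof of Theorem \ref{thmmain} constructs $B_i$ as an isolating-block-type neighborhood using the local primitive $f$ as a Lyapunov function, precisely so that the entry-time function $\phi_i$ is continuous and $U_i$ deformation retracts onto the contractible set $B_i$.

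The step you yourself flag as the principal obstacle---extracting a homoclinic cycle from a sequence of orbits along which the integral stays bounded---is the real content of the theorem, and your sketch of it leans on ``hyperbolic estimates near each $p_i$,'' which are unavailable: the zeros of a general closed $1$-form are not assumed non-degenerate, so no hyperbolicity can be invoked. The paper's second lemma avoids this by working only with the isolating neighborhoods: if for some $N$ no admissible $\epsilon$ exists, one obtains points $p_{i,n}\in\partial_{-}B_i$ whose orbits return to $B_i$ with integral $\geq -N$, extracts a convergent subsequence $q_i$ whose $\alpha$-limit is $p_i$ and whose $\omega$-limit is some other zero $p_j$, and iterates; finiteness of the zero set then closes the chain into a homoclinic cycle. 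As written, your proposal asserts this step rather than proving it, and the assertion rests on a hypothesis (hyperbolicity) that the theorem does not grant; combined with the cover and continuity issues above, the argument is not yet a proof.
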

See [6] $\S$10 for the original proof.

In the ordinary LS category, the number of critical points of a function is evaluated from $\cat(M)$ by using the following fact.  The fact is that any points in a closed manifold must converge to each critical point with a gradient flow and a closed manifold is collapsed along the gradient flow of the function.

In the closed 1-form version, there exists a closed 1-form with zeros less than $\cat(M,[\omega])$. We regard stable manifolds $W^s$ of zero point $p$ of $\omega$ as null-homotopic open covering.

\[W^s_p = \{x\in M| \lim_{t\rightarrow \infty}\phi_t(x) = p \} \]

It means that manifold is not compressible with the gradient flow of closed 1-form generally. This leads to the existence of homoclinic cycles on the gradient flow.

In the previous section, we extended $\cat(M,[\omega])$ to general non compact manifolds. The theorem of the existence of homoclinic cycles also generalizes to non compact manifolds.

The trajectory of a gradient flow on a compact manifold is divided into a part that converges to the zeros of $\omega$ and a part that continues to lower the integral value around the cycle.
The part in which the integral value continues to decrease is a part that does not exist in the case of the gradient of a function. In the non compact case, in addition to these two parts that appear in the gradient flow, there is also a part that decreases the integral value as it goes to infinity. On a non-compact manifold, it can be shown that a homoclinic cycle exists if all parts of the flow other than the part that converges to the fixed point reduce the integral value to $-\infty$.

The condition for Riemannian metricis is required in order for the integral value of the part where flow escapes to infinity to decrease to $-\infty$. This is because gradient is a vector field determined by $\omega$ and the metric. This was not necessary for compact manifolds. The fact that the behavior of a dynamical system changes by replacing the metric in this way is a new phenomenon in non compact manifolds.

\subsection{Proof of the Main Theorem}
Extending the existence theorem of homoclinic cycles to non compact manifolds. On non compact manifolds, conditions for the Riemannian metric are required in addition to the number of zeros of $\omega$.

\begin{theorem}[Main theorem]\label{thmmain}
   If a closed 1-form $\omega$ on a complete Riemannian manifold $(X,g)$ satisfies the following, then the gradient flow of $\omega$ has at least one homoclinic cycle.
\end{theorem}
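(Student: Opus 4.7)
The plan is to argue by contradiction, mirroring the proof of Theorem \ref{themhatanzawa} in the compact case but handling the non-compact complications using the two extra hypotheses (completeness of $\grad(\omega)$ and a uniform lower bound $|\grad(\omega)|>c$ outside some neighborhood of the zero set). Assume that the negative gradient flow $\phi_t$ has no homoclinic cycle. I would use this assumption together with $\phi_t$ to construct, for every $N>0$, an open covering of $X$ by strictly fewer than $\cat(X,\omega)$ null-homotopic pieces plus one residual piece $U$ on which a flow-induced homotopy forces the line integral of $\omega$ uniformly below $-N$; this contradicts the definition of $\cat(X,\omega)$.

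First I would fix pairwise disjoint open neighborhoods $V_p$ of the zeros $p$ of $\omega$, chosen inside the region in which $|\grad(\omega)|>c$ may fail. Completeness of $\grad(\omega)$ ensures $\phi_t$ is defined for all $t\in\R$. For each zero $p$ set $U_p=\{x\in X\mid \phi_t(x)\in V_p \text{ eventually and } \phi_t(x)\to p\}$. As in the compact case $U_p$ is open and null-homotopic in $X$, since one can slide points along $\phi_t$ until they enter a contractible neighborhood of $p$ and then contract. The residual set is $U=X\setminus \bigcup_p \overline{U_p'}$ for slight shrinkings $U_p'\subset U_p$ chosen so that the $U_p$ together with $U$ cover $X$.

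Next I would use the identity $\int_0^T \omega(-\grad(\omega))\,dt=-\int_0^T |\grad(\omega)|^2\,dt$, which shows that while a trajectory spends time $\tau$ outside $\bigcup_p V_p$ the integral drops by at least $c^2\tau$. If $\gamma$ is an orbit starting in $U$ then, by the absence of a homoclinic cycle, $\gamma$ cannot enter the $V_p$'s infinitely often without producing a connecting orbit between zeros; thus either $\gamma$ eventually escapes every compact set (an escape-to-infinity phenomenon peculiar to the non-compact setting, in which case the uniform bound on $|\grad(\omega)|$ drives the integral to $-\infty$ directly) or $\gamma$ spends only finitely many bounded intervals inside $\bigcup V_p$. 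In either case the total time spent outside $\bigcup V_p$ in any window $[0,T]$ grows linearly in $T$, and the homotopy $h_t(x)=\phi_{tT}(x)$ achieves integral $\leq -N$ for $T$ sufficiently large.

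The main obstacle is promoting this pointwise statement to a \emph{uniform} one: extracting a single $T=T(N)$ that works for all $x\in U$ simultaneously. In the compact case uniformity follows from compactness of the non-wandering set and its containment in the zero set once homoclinic cycles are ruled out. In the non-compact setting uniformity is the genuinely delicate point, because orbits in $U$ may escape to infinity arbitrarily slowly or may pass through the $V_p$'s in complicated patterns. I would handle this by stratifying $U$ according to how many times an orbit visits each $V_p$, using compactness of the $\overline{V_p}$ together with continuous dependence of entry and exit times to get local uniformity on each stratum, and then invoking the no-homoclinic-cycle hypothesis to bound the number of relevant strata. Once the uniform $T$ is in hand, the open cover $\{U,U_1,\ldots,U_k\}$ with $k$ equal to the number of zeros witnesses $\cat(X,\omega)\leq k$, contradicting the hypothesis that the number of zeros is less than $\cat(X,\omega)$.
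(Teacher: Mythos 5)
Your overall strategy (argue by contradiction, cover $X$ by neighborhoods attached to the zeros plus a residual set on which the flow drives the line integral below $-N$, and use the rate estimate $\frac{d}{dt}\int\omega=-|\grad(\omega)|^2\leq -c^2$ outside the $V_p$) matches the paper's, but two of your central steps do not work as stated. First, the sets $U_p=\{x\mid \phi_t(x)\to p\}$ are stable sets, and these are in general not open (for a saddle-type zero the stable set has positive codimension), so they cannot serve as members of an open cover; the claim that they are open ``as in the compact case'' is not what the compact proof does either. The paper instead takes $U_i$ to be the set of points whose orbit enters a carefully constructed compact isolating block $B_i$ around $p_i$ \emph{before} the line integral has dropped to $-N$; this is an open condition, and $U_i$ deformation retracts onto the contractible $B_i$ via the first-entry-time map $\phi_i(x)=\min\{t\geq 0\mid x\cdot t\in B_i\}$, whose continuity depends on the structure of $B_i$ (its exit set is $f^{-1}(-\delta)\cap B_i$ for a local primitive $f$ of $\omega$ near $p_i$).

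Second, the uniform time $T=T(N)$ you are trying to extract does not exist: points near the stable set of a zero take arbitrarily long for their integral to drop by $N$, so no single $T$ works and the homotopy $h_t(x)=\phi_{tT}(x)$ cannot be made uniform. The paper avoids this by defining $U=\{x\mid \int_{x\cdot[0,t_x]}\omega=-N \text{ for some } t_x>0\}$ and stopping the flow at the point-dependent time $t_x$; the uniformity is in the terminal value of the integral, not in the time. What makes the whole scheme close up --- and what your stratification sketch does not supply --- is the quantitative lemma that for every $N$ the blocks $B_i$ can be shrunk so that any orbit segment which exits through $\partial_{-}B_i$ and later re-enters $\text{Int}(B_i)$ has line integral at most $-N$. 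Its proof is a limiting argument: if it fails, one extracts convergent subsequences of exit points and re-entry times, obtains in the limit heteroclinic orbits between zeros, and chains finitely many of them into a homoclinic cycle, contradicting the standing assumption. Note that a single orbit visiting the $V_p$ infinitely often does not by itself produce a cycle; it is the limits of sequences of orbit segments that do, and that compactness argument is the missing core of the proof. Your use of the hypotheses $|\grad(\omega)|\geq c$ and completeness to rule out escape to infinity with bounded integral is, however, exactly the right role for those assumptions and is where the paper also invokes them.
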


\begin{itemize}
   \item $\omega$ has zeros less than $cat(X,\omega)$
   \item The vector field $\grad(\omega)$ is complete, and there exists a certain positive real number $c>0$ and a neighborhood $U$ of the zero point, and on $M-U$ $|grad(\omega)| \geq c$
\end{itemize}

\begin{proof}
   Let the set of zeros of $\omega$ be $\{p_1,\dots p_n\}$. Each $p_i$ has an open neighborhood $U$ such that $\omega|_{U} = df ,f(p)=0$ , $f$ if called Lyapunov function that decreases along gradient flow. The trajectory of the flow in $U$ of $x\in U$ is written as $C_{x}$. Construct a special neighborhood of the zero point $p$ using $f$.
\end{proof}

   \begin{lemma}\label{lemmahatano}
With a compact neighborhood $B$ such that $B\subset{U}$ and $\delta \geq 0$,
\begin{itemize}
   \item (i) For each $x\in U$, is $C_x\cap B$ empty or $C_x \cap f^{-1}([-\delta,\delta])$

   \item (ii)$x\in \partial B$ and $f(x)\in (-\delta,\delta)$ then $C_x\cap B \subset{\partial B}$

\end{itemize}
There exists $B$ such that
   \end{lemma}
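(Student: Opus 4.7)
The plan is to construct a Conley-style isolating block around $p$. The top and bottom faces of $B$ will be the level sets $\{f = \pm\delta\}$, while the main work is to arrange the lateral boundary so that trajectories through it are tangent to $\partial B$, which is precisely what (ii) encodes.

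First I would shrink $U$ so that $p$ is the only zero of $\omega$ in $U$; then $|\grad f| > 0$ on $U \setminus \{p\}$ and $f$ strictly decreases along every nonstationary trajectory in $U$. Pick a smoothly bounded, relatively compact neighborhood $W$ of $p$ with $\overline{W} \subset U$, and choose $\delta > 0$ small enough that $K := \overline{W} \cap f^{-1}([-\delta,\delta])$ is compact and $\pm\delta$ are regular values of $f|_{\overline{W}}$ (by Sard's theorem). The naive candidate $B = K$ lies in $f^{-1}([-\delta,\delta])$ but can violate (i): a trajectory might enter $K$, exit $W$ laterally while still in the slab $|f| < \delta$, and re-enter. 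To remedy this, define
\[
B := \bigl\{\, x \in K : \phi_t(x) \in \overline{W}\ \text{for every}\ t \in \R\ \text{with}\ f(\phi_t(x)) \in [-\delta,\delta]\,\bigr\},
\]
the maximal subset of $K$ whose full slab-segment of its trajectory remains in $\overline{W}$. For $x \neq p$, the time interval $\{t : f(\phi_t(x)) \in [-\delta,\delta]\}$ is bounded, since $\tfrac{d}{dt} f(\phi_t(x)) = -|\grad f(\phi_t(x))|^2$ is bounded away from zero on $K$ outside any fixed neighborhood of $p$; combined with continuity of $\phi_t$ and closedness of $\overline{W}$, this makes $B$ a closed, hence compact, subset of $K$.

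Condition (i) is then essentially built in: if $y = \phi_{t_0}(x) \in B$, then by the defining property every $\phi_t(y)$ with $f(\phi_t(y)) \in [-\delta,\delta]$ lies in $K$, and inherits the same slab-invariance, so lies in $B$. Hence $C_x \cap B = C_x \cap f^{-1}([-\delta,\delta])$. For (ii), a point $x \in \partial B$ with $|f(x)| < \delta$ cannot lie on the top or bottom faces $\{f = \pm\delta\}$, so it lies on the lateral wall $\partial W \cap f^{-1}((-\delta,\delta))$, or is a limit of such points. By maximality of $B$, arbitrarily close to $x$ there are points whose trajectories exit $\overline{W}$ while still inside the slab; continuous dependence of the flow on initial conditions then forces the trajectory through $x$ itself to stay in $\partial B$ for as long as it remains in $B$.

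The main obstacle is precisely this last Wazewski-type step: arguing rigorously that the lateral piece of $\partial B$ is flow-tangent, and that $\partial B$ admits a clean decomposition into entry, exit, and tangent subsets. In general this is delicate, and I expect to need a preliminary smoothing of $\partial W$ — for instance, replacing $W$ with $\phi_{[-\eta,\eta]}(W')$ for a smaller $W'$ and small $\eta > 0$ — so that $\partial W$ is $C^1$-close to a flow-tangent hypersurface before one even forms $K$. Once that is arranged, the two conditions fall out of the definition of $B$ together with the strict monotonicity of $f$ along nonstationary orbits.
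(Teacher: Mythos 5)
Your construction is in the same spirit as the paper's (both define $B$ as a maximal ``slab-invariant'' set around $p$ inside the slab $f^{-1}([-\delta,\delta])$), but there is a genuine gap. You never establish the key quantitative fact that, for $\delta$ small enough, a trajectory segment that starts near $p$ and stays in the slab cannot wander far from $p$ before leaving the slab. This is exactly what the paper proves first, by a compactness argument: if no such $\delta$ existed, one extracts a sequence $x_k$ with $f(x_k)\in[-1/k,1/k]$, $d(x_k,p)=\epsilon/2$, and times $t_k$ with $d(x_k\cdot t_k,p)\geq\epsilon$ while the whole segment stays in $f^{-1}([-1/k,1/k])$; passing to a limit produces a nonconstant arc of trajectory contained in $f^{-1}(0)$, contradicting strict monotonicity of $f$ along nonstationary orbits. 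Without this step your $B$ is not shown to contain any neighborhood of $p$: a point arbitrarily close to $p$ could, for all you have argued, exit $\overline{W}$ before its $f$-value leaves $(-\delta,\delta)$, and would then be excluded from $B$, so $B$ could degenerate and fail to be a compact \emph{neighborhood}. Choosing $\delta$ merely so that $\pm\delta$ are regular values does not address this; $\delta$ must be tied to $W$ (or to $\epsilon$) through the excursion bound. Relatedly, your stated reason for compactness of $B$ is false: for a point $x$ on the stable set of $p$, the interval $\{t: f(\phi_t(x))\in[-\delta,\delta]\}$ is unbounded, since the trajectory converges to $p$ inside the slab and $|\grad f|$ is not bounded below there. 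Closedness of $B$ can still be argued directly from continuity of the flow, but not via the boundedness claim you make.

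A secondary point: you leave (ii) unproven, anticipating a delicate Wazewski-type tangency analysis and a smoothing of $\partial W$. This is not needed. The lemma asks only for the set-theoretic statement that $C_x\cap B\subset\partial B$ when $x\in\partial B$ with $f(x)\in(-\delta,\delta)$, not for a smooth entry/exit decomposition of $\partial B$. Once $B$ is defined as the closure of the set of points whose entire slab-segment stays within $\epsilon/2$ of $p$, one checks (as the paper does) that $\text{Int}(B)$ is slab-invariant: if one point of $C_x\cap f^{-1}((-\delta,\delta))$ lies in $\text{Int}(B)$, then all of it does, by continuous dependence on initial conditions; hence a trajectory through a boundary point never meets $\text{Int}(B)$, which is (ii). I would restructure your argument accordingly: first prove the excursion bound for small $\delta$, then define $B$ as the closure of the open set of points whose slab-segment stays near $p$, and derive (i) and (ii) from slab-invariance of $B$ and of its interior.
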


   \begin{proof}
     Let $N_{2\epsilon}$ be the compact neighborhood included in $U$ near $2\epsilon$ of $p$. First, with $\delta >0$, $\text{dist}(C_x\cap f^{-1}([-\delta,\delta]) ,p)\leq \frac{\epsilon}{2}$ When

     \begin{eqnarray}
       C_x\cap f^{-1}([-\delta,\delta]) \subset{N_{\epsilon}}
     \end{eqnarray}
Show that there exists $\delta$ such that If such $\delta$ does not exist, then $x_k\in U$

\begin{eqnarray}
  x_k\in \left[\frac{-1}{k},\frac{1}{k}\right],\quad
  d(x_k,p)\leq \frac{\epsilon}{2}
\end{eqnarray}
There exists a point sequence $\{x_k\}$ such that Similarly, we can also take $t_k$ such that $d(x_k \cdot t_k,p)\geq \epsilon$ for $t_k$, and we can make it a definite sign by taking a subsequence of $t_k$. Assuming $t_k\geq 0$, we do not lose generality. That is, in the sequence $t_k>0$, $d(x_k,p)\geq\epsilon$ and $x \cdot [0,t_k] \subset{L^{-1}([-\frac{1}{k },\frac{1}{k}])}$ exists.

If we rearrange $x_k$ so that $d(x_k,p)=\frac{\epsilon}{2}$, then $N_{\frac{\epsilon}{2}}$ is more compact than some $x There exists a convergent subsequence that converges to \in N_{\frac{\epsilon}{2}}$.

Also, since $d(x_k,x_k\cdot t_k)\geq \frac{\epsilon}{2}$ and the velocity of flow is bounded in the vicinity of $p$, $t_k$ converges to 0. Never. If $\{t_k\}$ is a bounded sequence and is included in $(0,T]$ at some $T$, then a convergent subsequence also exists for $t_k$. From the continuity of flow, $x_k \cdot[0,t]$ converges to $x \cdot[0,t]$. This is
\begin{eqnarray}
\bigcap_{k\geq1}f^{-1}\left(\left[\frac{-1}{k},\frac{1}{k}\right]\right) = f^{-1} (0)
\end{eqnarray}
include. However, this contradicts the fact that $x\in N_{\epsilon} - \{p\}$ and that $f$ monotonically decreases with flow.

From this, $\{t_k\}$ is unbounded. It may be $t_k\geq1$. However, in this case, $x_k \cdot[0,1]$ also converges to $x \cdot[0,1]$, and $x \cdot [0,1] \subset{f^{-1}(0) }$, which is unreasonable. From these, we can see that the point sequence $\{x_k\}$ does not exist in the first place, so we know that $\delta>0$ that satisfies the condition exists.

The following closure of the open set satisfies the conditions of the lemma.

\begin{eqnarray}
   B := \overline{\{x\in f^{-1}((-\delta,\delta))|\text{dist}(C_x\cap f^{-1}(-\delta,\delta) , p) < \frac{\epsilon}{2} \}}
\end{eqnarray}

Condition (i) first considers $x\in B$ and $\{x_i\} \in \text{Int}(B)$ that converges to it. Any interval $[t_1,t_2] including $0$ that satisfies x \cdot[t_1,t_2]\subset{f^{-1}([-\delta,\delta])}$ and a small real number $ For \alpha>0$, a sufficiently large $i$ is
$x_i \cdot[t_1+\alpha,t_2-\alpha]\subset{f^{-1}((-\delta,\delta))}$ is satisfied. Therefore, $C_x\cap f^{-1}([-\delta,\delta])\subset{B}$. Conversely, from the continuity of flow, we know that $C_x\cap B\subset{f^{-1}([-\delta,\delta])}$. Therefore, (i) is satisfied.

Condition (ii) is that if a point $C_x$ is included in $\text{Int}(B)$, then $C_x\cap f^{-1}((-\delta,\delta))$ This can be seen from the fact that all are included in $\text{Int}(B)$.
   \end{proof}

(Return to proof of \ref{thmmain})

Suppose that the number of zero points $n$ is less than $cat(M,\omega)$. If there is no homoclinic cycle, then using the neighborhood of the lemma around the zero point, for any $N$ we have an open cover $M=U\cup U_1 \dots U_n$ and $U_i$ is reducible. , $U\times [0,1]\rightarrow We show that there exists a homotopy of X$ where the line integral is uniformly less than $-N$. This results in
$n\geq cat(M,\omega)$, leading to a contradiction.

For any zero point $p_i$ of $\omega$, there exists a compact neighborhood $B_i$ that satisfies the lemma. From the configuration of $B_i$, when the flow trajectory leaves $B_i$, it passes through $\partial _{-} B_i:=f^{-1}(-\delta)\cap B_i$.

\begin{lemma}\label{lemmahatano}
   For any integer $N>0$, there exists $\epsilon>0$ that satisfies the following.

   \begin{itemize}
   \item The flow that exited $p\in\partial_{-}B_{i,\epsilon}$ at time $t=0$ becomes $p \cdot t\in \text{Int }(B_{i,\epsilon})$, then the integral of that orbit becomes less than or equal to $-N$.

\end{itemize}

\end{lemma}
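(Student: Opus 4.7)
The strategy will be proof by contradiction, exploiting the standing no-homoclinic-cycle hypothesis of the enclosing proof of Theorem \ref{thmmain}. Negating the conclusion yields, for some fixed $N>0$, sequences $\epsilon_k \downarrow 0$ and orbit segments $\gamma_k\colon[0,T_k]\to X$ of the negative gradient flow with $\gamma_k(0)\in \partial_{-}B_{i,\epsilon_k}$, $\gamma_k(T_k)\in\text{Int}(B_{i,\epsilon_k})$, $\gamma_k(t)\notin\text{Int}(B_{i,\epsilon_k})$ for $0<t<T_k$, and $\int_0^{T_k}\omega(\gamma_k'(t))\,dt > -N$. The plan is to extract from this sequence a limiting homoclinic cycle based at $p_i$, contradicting the hypothesis.

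First I would use the identity $\omega(\gamma_k'(t))=-|\grad\omega(\gamma_k(t))|^2$ along the flow to rewrite the integral bound as $\int_0^{T_k}|\grad\omega|^2\,dt<N$. Combined with the assumption $|\grad\omega|\geq c$ on $X\setminus U$, where $U=\bigcup_j U_j$ is the union of the fixed neighborhoods of the zeros, the total time spent by $\gamma_k$ outside $U$ is at most $N/c^2$. On a compact region containing $\overline U$ and a tubular collar, $|\grad\omega|$ is bounded above by some $C$, so each passage of $\gamma_k$ between two different $U_j$'s traverses distance at least $\eta=\min_{j\neq l}\text{dist}(U_j,U_l)>0$, takes time at least $\eta/C$, and lowers the integral by at least $c^2\eta/C$. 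Hence the number of such passages is uniformly bounded in $k$, and after extracting a subsequence one may fix the visiting pattern $p_i=p_{j_0},p_{j_1},\ldots,p_{j_m},p_{j_{m+1}}=p_i$.

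For each index $a$ I would reparameterize the $a$-th transit so that it runs from the last time $\gamma_k$ leaves $U_{j_a}$ to the first time it enters $U_{j_{a+1}}$. These reparameterized segments lie in a fixed compact set and have uniformly bounded duration and velocity, so Arzel\`a-Ascoli together with completeness of $\grad\omega$ yields a $C^0$-subsequential limit which is itself an integral curve of $-\grad\omega$. Since $\epsilon_k\to0$ forces the initial and terminal points of the transit to become arbitrarily close to $p_{j_a}$ and $p_{j_{a+1}}$ respectively, extending the limit curve forwards and backwards by the flow produces a heteroclinic orbit with $\alpha$-limit $p_{j_a}$ and $\omega$-limit $p_{j_{a+1}}$. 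Concatenating the $m+1$ heteroclinics through the fixed pattern gives a homoclinic cycle based at $p_i$, yielding the required contradiction.

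The main technical obstacle will be this limiting step, specifically the treatment of the time $\gamma_k$ dwells inside each $B_{j_a,\epsilon_k}$: near a zero of $\omega$ the flow slows down and the dwell times can grow without bound as $\epsilon_k\to0$, so a naive Arzel\`a-Ascoli argument on $[0,T_k]$ is hopeless. One must instead isolate the transits, extract limits only on them, and then verify that as $\epsilon_k\to0$ the terminal point of one transit and the initial point of the next both converge to the same common zero, so that the separate heteroclinic limits glue into a single honest homoclinic cycle rather than merely broken-orbit data.
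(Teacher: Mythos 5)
Your overall strategy is close in spirit to the paper's: the paper also argues by contradiction with the standing no-homoclinic-cycle hypothesis and extracts a limiting cycle, but it does so iteratively (take a convergent subsequence of exit points on the \emph{fixed} cross-section $\partial_{-}B_i$, show the limit orbit is heteroclinic from $p_i$ to some $p_j$ by analyzing its $\alpha$- and $\omega$-limit sets, then repeat at $p_j$ using finiteness of the zero set), whereas you bound the number of transits a priori via $\int|\grad(\omega)|^2\,dt<N$ and extract all heteroclinic pieces at once. Your quantitative estimate is a genuine addition: it also gives, via Cauchy--Schwarz, a length bound $\int|\grad(\omega)|\,dt\le N/c$ outside the $U_j$, which is what you actually need to justify the assertion that the $\gamma_k$ stay in a fixed compact set --- this is not automatic, since $X$ is non-compact and no upper bound on $|\grad(\omega)|$ is assumed, and your ``time at least $\eta/C$'' step silently presupposes it.

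The genuine gap is in the step turning the transit limits into heteroclinic orbits. You claim that ``$\epsilon_k\to 0$ forces the initial and terminal points of the transit to become arbitrarily close to $p_{j_a}$ and $p_{j_{a+1}}$,'' and later that one should verify that consecutive transit endpoints ``converge to the same common zero.'' Neither is true: for every intermediate transit those endpoints lie on the \emph{fixed} boundaries $\partial U_{j_a}$, $\partial U_{j_{a+1}}$ and remain a definite distance from the zeros; only the endpoints of the full segment $\gamma_k$ are controlled by $\epsilon_k$. What must actually be shown is that the \emph{dwell times} of $\gamma_k$ inside each intermediate $U_{j_a}$ tend to infinity (or else two consecutive transits merge into a single orbit segment in the limit, which is harmless), so that the backward and forward extensions of the limit orbits stay in $\overline{U_{j_a}}$ for all time and then converge to $p_{j_a}$ because the local primitive $f$ with $\omega|_{U_{j_a}}=df$ is a strict Lyapunov function whose only rest point there is $p_{j_a}$. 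At the base zero $p_i$ the blow-up of the dwell time does follow from $\epsilon_k\to 0$ together with uniqueness of solutions of the flow ODE (a bounded dwell time would make the limit orbit reach the fixed point $p_i$ in finite time) --- this is exactly how the paper argues that its parameters $t_{i,n},s_{i,n}$ diverge to $\pm\infty$ --- but at the intermediate zeros it requires the separate dichotomy above. Without it, your construction yields only broken-orbit data whose pieces need not connect at the zeros, and the contradiction does not yet follow.
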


\begin{proof}
   For some $N>0$, any $\epsilon>0$ means that the neighborhood $B_{\epsilon}$ of $p_i$ does not satisfy the condition. Fix one $\epsilon_0$. Since the conditions of the lemma are not satisfied, there exists $p_{i,n}\in \partial_{-} B_i, t_{i,n}>0 ,s_{i,n}<0$ that satisfies the following conditions.
   
   \begin{itemize}
     \item (a) $p_{i,n} \cdot[s_{i,n},0] \subset{B_i}$ and $\text{dist}(p_{i,n} \cdot s_{i, n},p_i)< \frac{1}{n} $
     \item (b) $\text{dist}(p_{i,n} \cdot t_{i,n},p_i) < \frac{1}{n}$
     \item (c) $\int_{p_{i,n} \cdot[0,t_{i,n}]} \omega \geq -N $
   \end{itemize}

   $p_{i,n}$ has a convergent subsequence in $\partial_{-}B_i$. Let it be $q_i\in \partial_{-}B_i$. At this time, $t_{i,n} and s_{i,n}$ also converge to $\infty,-\infty$ for that subsequence, respectively. This is the equilibrium of the dynamical system of the flow where $x_{i,n} \cdot t_{i,n}$ and $x_{i,n} \cdot s_{i,n}$ are the zero points of $\omega$. Since it converges to a point, it can be seen from the uniqueness of the solution to an ordinary differential equation.

   Examine the $\omega-$ limit set and $\alpha-$ limit set of $q_i$. The $\alpha$ limit set is that flow always includes a trajectory in the negative direction in $B_i$, and that $p_{i,n} \cdot s_{i,n}$ converges to the equilibrium point. It can be seen that $p_i$.

   From the assumption about closed 1-form $\omega$, $|grad(\omega)|$ does not disappear outside the vicinity of the zero point and the lower bound of the velocity is positive, and $\lim_{t\ rightarrow \infty}\int_{q_i \cdot[0,t]} \omega \geq From -N$, flow must converge in the positive direction. Therefore, the $\omega-$ limit set is one of the equilibrium points of flow.

   If $\omega(q_i)=p_i$, the assumption is violated. Therefore, $\omega(q_i)=p_j (i\neq j)$. There exists $T_0$ such that $q_i \cdot [T_0,\infty) \in B_j$ for some minimum $T_0 >0$. For sufficiently large $n$, the flow trajectory of $p_{i,n}$ enters $B_j$ near $T_0$. The flow of $p_{i,n}$ reenters $B_i$ at $t\rightarrow +\infty$, so it leaves $B_j$ again at a certain time. Therefore
   $t_{i,n}$ becomes $p_{i,n}^{\prime}=p_{i,n} \cdot t_{i,n}^{\prime}\in \partial_{-}B_j ^{\prime} > T_0$ exists. By taking the subsequence of $p_{i,n}^{\prime}$ again, it converges to $q_i^{\prime}\in B_j$ with $p_{i,n}^{\prime}$ You can take it as you like. Since $t_{i,n}^{\prime} - T_0$ converges to $\infty$, $\alpha(q_i^{\prime}) \in B_j$. Since the equilibrium point is finite, repeating this operation will yield a homoclinic cycle connecting $q_i$. This means that the assumption that a neighborhood like the one in the lemma cannot be taken was wrong, so the lemma has been proven.

(Return to proof of \ref{thmmain})

For each zero point of $\omega$, take the neighborhood $B_i$ of the lemma.

\begin{eqnarray}
   U := \left\{x\in M \middle| \text{For some $t_x > 0$}, \int_{x \cdot[0,t_x]} \omega = -N \right\}
\end{eqnarray}

Then $U$ is an open set.
And for the zero point $\{p_1,\dots p_n \}$ of $\omega$

\begin{eqnarray}
   U_i := \left\{x\in M \middle| \text{For some $t_x>0$}, x \cdot t\in \text{Int}(B_i)\ \text{and} \ \int_{x \cdot [0,t_x]} \omega > - N\right\}
\end{eqnarray}
Then $U_i$ are also open sets. And these cover $M$. If $x\notin U$, then $\lim_{t\rightarrow \infty} \int_{x[0,t]}\omega \geq -N$. Therefore, this flow converges to an equilibrium point, and it can be seen that for a certain $i$, $x\in U_i$. Furthermore, if we consider the flow due to $\grad(\omega)$ to be a homotopy, this becomes a homotopy that uniformly makes the integral value less than or equal to $-N$.

Finally, if we show that $U_i$ is the displacement retract of $B_i$, we can show that $U_i$ is contractible and $cat(M,\omega) \leq n$, which is a contradiction. From this, $\grad(\omega)$ has a homoclinic cycle.

The function $[0,\infty )$ from $U_i$

\begin{eqnarray}
\phi_i(x) :=\min \{t \geq 0|x \cdot t \in B_i \} = \inf\{t\geq 0|x \cdot t\in \text{Int}(B_i) \}
\end{eqnarray}

It is defined as
The equal sign on the right side can be seen from the construction method of the neighborhood $B_i$ because $\partial_{-}B_i$ is the exit set of the flow.

If we show that $\phi_i$ is continuous, we can retract from $U_i$ to $B_i$, which is contractible, and the proof of the theorem is completed. For a sequence of $x\in U_i$ and $x_k \in U_i$ that converges to $x$,

\begin{eqnarray}
    \limsup \phi_i(x_k) \leq \phi_i(x) \leq \liminf \phi_i(x_k)
\end{eqnarray}
All you have to do is show.
Show the left-hand inequality. From the second definition of (\ref{a}), if $t>0$ is $x\cdot t\in \text{Int}(B_i)$, then $x_k \cdot for sufficiently large $k$ t \in \text{Int}(B_i)$. Therefore, any sufficiently large $k$ is $\phi_i(x_k)\leq t$, and $\limsup \phi_i(x_k) \leq t$. Since $t>\phi_i (x)$ is arbitrary, we know $\limsup \phi_i(x_k) \leq \phi_i (x)$. $\liminf \phi_i(x_k) \geq \phi_i(x)$ can also be shown using the first definition of (\ref{a}). Therefore, $\phi_i$ is continuous.

\end{proof}

The key to the proof is to find a neighborhood with good properties for each equilibrium point in the flow.
It is a neighborhood, which we showed by Lemma \ref{lemmhatano}, that exists such that when a flow leaves that neighborhood and enters the same neighborhood again, the line integral approaches $-\infty$ sufficiently.
By taking such a neighborhood, it is possible to collect the flow into a contractible neighborhood around the equilibrium point by allowing the flow to flow until the time when the line integral does not fall to a certain standard. If you extend the flow too much, it will end up passing near other equilibrium points.
This method of determining the neighborhood is based on the proof in [10].

When flow is a gradient system and $\omega=df$, a neighborhood where the above situation does not occur can be taken. In other words, once a flow exits, it can take a neighborhood where it never returns.
[6] uses such a neighborhood to prove the theorem \ref{themhatanzawa} by pulling the flow back to the covering space where $\omega$ is exact.

For non-compact manifolds, the Riemannian metric is not arbitrary. A condition is required for the behavior of $\grad(\omega)$ at infinity. It is the second condition of the theorem \ref{thmmain}. This second condition is always easy to satisfy for compact manifolds, so it is only necessary for non-compact manifolds.

An example of a flow that does not satisfy the second condition of theorem\ref{thmmain} is a flow where the line integral does not diverge while going to infinity. The gradient of a bounded function on $\R$, for example $f=\tan^{-1}(x)$, is

\begin{eqnarray}
  \grad(f) = \frac{1}{1+x^2}\frac{\partial}{\partial x}
\end{eqnarray}
becomes. differential equation

\begin{eqnarray}
  \dot{x} = \frac{1}{x^2+1}
\end{eqnarray}
is a differential equation satisfied by the gradient system of $f$ on $\R$. $\grad(f)$ is complete because the integral curve of $f$ and its arbitrary order differential is defined above bounded $\R$.
However, $\grad(f)$ disappears at infinity, and the integral over $\R$

\begin{eqnarray}
   \int^{\infty}_0 (\grad(f),\grad(f)) dx = \frac{\pi}{2}
\end{eqnarray}
Therefore, the integral value does not diverge. This gradient system is an example of not satisfying the second condition of theorem\ref{thmmain}.

\section{Dynamical system of the gradient flow of on non compact manifolds}

\subsection{The differences from compact manifolds}
In a compact manifold, the quantity $\cat(M,[\omega])$ is a homotopy invariant that does not depend on how to take the equivalence class of $[\omega]$ 
Any homotopy that lowers the integral value used in the definition of $\cat(M,[\omega])$ on a closed manifold could be used. However, if we extend this definition to a non compact state, $\cat(X,\omega)$ depends on the closed 1-form even if it is the same cohomology class.
This problem arises from the complexity of dynamical systems on non compact manifolds.

\begin{proposition}In real line $\R$, the derivations of the functions 0 and x are in same de-Rham cohomology class of $\R$. But $\cat(\mathbb{R},0)=1$, $\cat(\mathbb{R} ,dx)=0$
\end{proposition}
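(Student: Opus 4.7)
The plan is to verify the two calculations directly from the definition of $\cat(X,\omega)$, and to observe that the bounded-perturbation proposition (which earlier forced $\cat(X,\omega)=\cat(X,\omega+df)$) simply does not apply here because the primitive of $dx-0=dx$ on $\R$ is $f(x)=x$, which is unbounded. This is precisely the phenomenon the proposition is designed to illustrate: cohomologous closed $1$-forms on a non-compact manifold can differ only by an exact form whose primitive escapes to infinity, and then the line-integral condition behaves completely differently for the two representatives. Cohomology-class membership is immediate since $H^{1}_{dR}(\R)=0$.

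Next I would compute $\cat(\R,dx)=0$. Here the strategy is to take $k=0$, so the only open set in play is the ``runaway'' piece $U$, which I let equal all of $\R$. For any positive integer $N$ I define the smooth homotopy $h_{t}(y)=y-Nt$; this satisfies $h_{0}=\mathrm{id}_{\R}$, and along the path $\gamma_{y}(t)=y-Nt$ one computes $\int_{\gamma_{y}}dx=\gamma_{y}(1)-\gamma_{y}(0)=-N$, so the line-integral condition holds uniformly on $U=\R$. Hence $\cat(\R,dx)\le 0$, which forces $\cat(\R,dx)=0$.

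Finally I would show $\cat(\R,0)=1$. For the upper bound, take $k=1$, $U_{1}=\R$, and $U=\emptyset$; then $U_{1}$ is null-homotopic because $\R$ is contractible, and the condition on $U$ is vacuous. Thus $\cat(\R,0)\le 1$. For the lower bound, suppose for contradiction that $\cat(\R,0)=0$. Then for every $N>0$ there must exist an open set $U=\R$ and a homotopy $h_{t}$ with $\int_{\gamma_{x}}0\le -N$ for all $x\in U$. But the line integral of the zero form along any path is $0$, which cannot be $\le -N$ for any $N>0$. This contradiction gives $\cat(\R,0)\ge 1$, completing the proof.

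There is no serious obstacle here; the entire content of the proposition is that the definition of $\cat(X,\omega)$ is sensitive to the global behavior of the line integral of the chosen representative, and $\R$ is the minimal example separating a representative whose primitive is constant from one whose primitive is unbounded. The only thing to be slightly careful about is the boundary case $k=0$ in the definition, which is what produces the strict inequality $\cat(\R,dx)<\cat(\R,0)$ even though the two forms are de Rham cohomologous.
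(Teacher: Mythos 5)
Your proposal is correct and follows essentially the same route as the paper: the translation homotopy $h_t(y)=y-Nt$ is exactly the paper's ``homotopy that moves in parallel in the negative direction'' giving $U=\R$ and $\cat(\R,dx)=0$, while the paper dismisses $\cat(\R,0)=1$ as obvious and you simply supply the (correct) details via contractibility for the upper bound and the vanishing of $\int\gamma\,0$ for the lower bound.
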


\begin{proof}
    If we integrate along a homotopy that moves in parallel in the negative direction, we can uniformly lower the integral value arbitrarily, so $U$ in the definition of category can be taken as $U=\mathbb{R}$.  Therefore $cat(\mathbb{R},dx)=0$. $cat(\mathbb{R},0)=1$ is obvious.
\end{proof}

As you can see from this example, the integral value decreases as the flow goes to infinity. In the case of non-compact manifolds, such trajectories are included in addition to continuing around the corresponding cycle.
Even if orbits have different properties in a dynamical system, if the integral value can be lowered, they can be combined into one $U$. Based on this consideration, in the previous chapter we described the non compact manifold $\cat(M,[\omega])$ was successfully extended. However, if $\omega$ is replaced, the part of the flow that goes to infinity will naturally be different, and as a result, there will be a difference in the value of category.
An example of calculating $\cat(X,\omega)$ is further described below.

Also, since the gradient is a vector field determined by $\omega$ and the Riemannian metric, it becomes important to take the Riemannian metric on non compact manifolds, which was not considered important until now.
We discuss the relationship between how to take the Riemannian metric and the existence of homoclinic cycles, which is a phenomenon specific to non compact manifolds.

\subsection{Some Calculations of $\cat(X,\omega)$}
For non compact manifolds, we can choose a pair of manifold and closed 1-form such that $cat(X,\omega)$ is arbitrarily large. This can be seen from the following lemma.

\begin{lemma}
Let $M$ be a smooth manifold, and let $\omega$ be a closed 1-form on $M$.
For any bounded $C^{\infty}$ function $f$ on $\mathbb{R}^n$
$cat(M\times \mathbb{R}^n,\omega + df ) = cat(M,\omega)$
\end{lemma}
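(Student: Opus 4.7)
The plan is to combine the earlier Proposition (invariance of $\cat$ under adding $df$ for bounded $f$) with the homotopy invariance Lemma~\ref{lemhomo}, after interpreting the 1-form $\omega + df$ on $M \times \R^n$ as $\pi_M^{*}\omega + d(\pi_{\R^n}^{*} f)$, where $\pi_M$ and $\pi_{\R^n}$ are the two canonical projections.

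First, since $f$ is bounded on $\R^n$, the pullback $\pi_{\R^n}^{*} f$ is a bounded $C^{\infty}$ function on $M\times\R^n$. Applying the earlier Proposition with this bounded function I obtain
\[
\cat\bigl(M\times\R^n,\ \pi_M^{*}\omega + d(\pi_{\R^n}^{*} f)\bigr) = \cat\bigl(M\times\R^n,\ \pi_M^{*}\omega\bigr),
\]
so it suffices to prove $\cat(M\times\R^n,\ \pi_M^{*}\omega) = \cat(M,\omega)$.

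Next I would apply Lemma~\ref{lemhomo} with $X=M$, $Y = M\times\R^n$, the zero-section $\phi:M\to M\times\R^n$, $\phi(x)=(x,0)$, and the projection $\psi = \pi_M:M\times\R^n\to M$. The composition $\psi\circ\phi$ is already the identity on $M$, so the constant homotopy has vanishing line integrals of $\omega$ and is trivially bounded. For the other composition, $\phi\circ\psi$ sends $(x,v)$ to $(x,0)$, and the natural straight-line homotopy $h_t(x,v)=(x,(1-t)v)$ connects it to the identity of $M\times\R^n$. The velocity vector of this homotopy lies entirely in the $\R^n$-direction, whereas $\psi^{*}\omega = \pi_M^{*}\omega$ annihilates every tangent vector to the $\R^n$-factor; hence the line integrals of $\psi^{*}\omega$ along the trajectories of $h_t$ are identically zero, a fortiori uniformly bounded. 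Lemma~\ref{lemhomo} then yields $\cat(M,\omega) = \cat(M\times\R^n,\ \pi_M^{*}\omega)$, closing the chain of equalities.

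The only genuine thing to verify is the bounded line-integral hypothesis of Lemma~\ref{lemhomo}, and this becomes transparent here because the contracting homotopy moves purely in the $\R^n$-direction, while the form being pulled back has no $\R^n$-component. I do not expect any real obstacle: non-compactness of $\R^n$ causes no trouble precisely because the given 1-form splits as a pullback from $M$ plus an exact form with bounded primitive, which are exactly the two ingredients the earlier Proposition and Lemma~\ref{lemhomo} were set up to handle.
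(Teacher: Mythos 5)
Your proof is correct and follows essentially the same route as the paper: the paper's own argument also uses the projection/zero-section homotopy equivalence and observes that the line integral of $\omega+df$ along the straight-line contraction of the $\R^n$ factor reduces to a difference of values of the bounded function $f$. Your only organizational difference is to strip off the $d(\pi_{\R^n}^{*}f)$ term first via the earlier Proposition before invoking Lemma~\ref{lemhomo} on $\pi_M^{*}\omega$ alone, which if anything makes the application of that lemma cleaner, since the form on $M\times\R^n$ is then literally a pullback along the projection as the lemma requires.
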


\begin{proof}
   $\psi:M\times \mathbb{R}^n\twoheadrightarrow M$ is projection and $\phi:M\hookrightarrow M\times \{0\}$ is inclusion, these become  homotopy maps $M\times \R^n$ to $M$.
   The integral of $\omega+df$ along the homotopy connecting $\phi\circ\psi$ and the identity map is the difference between two points on the fiber of $\mathbb{R}^n$ of $f$. be. Since $f$ is bounded, this means that the line integral is uniformly bounded homotopy, so $\cat(M\times\mathbb{R}^n,\omega+df)=\cat(M,\omega)$
\end{proof}

This result is extended to the general case of vector bundles.

\begin{corollary}
   Let $E\rightarrow M$ be a vector bundle over a closed manifold $M$. If $\pi:E\rightarrow M$ is a projection and $s$ is a zero section, $s\circ \pi$ becomes an identity map and a homotopy map. When the following holds true, $\cat(E,\pi^{*}\omega)=\cat(M,\omega)$.
\[
     \text{If some $C>0$ exists and for any $x\in E$}
     \left| \int_{\gamma(x)}\omega \right| < C
   \]
   $\gamma$ is a homotopy on $E$ that connects $s\circ \pi$ and the identity map.
\end{corollary}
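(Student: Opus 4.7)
The plan is to derive this as a direct application of the homotopy invariance lemma \ref{lemhomo}. I would take $X = M$, $Y = E$, and set $\phi = s \colon M \to E$ (the zero section) and $\psi = \pi \colon E \to M$ (the bundle projection); both are smooth maps between manifolds since $E \to M$ is a smooth vector bundle, so they are legitimate candidates for the pair of homotopy maps in the lemma.

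Next, I would verify the two composition conditions of Lemma \ref{lemhomo}. On the $M$ side, $\psi \circ \phi = \pi \circ s = \mathrm{id}_M$ literally, so the constant homotopy $r_t \equiv \mathrm{id}_M$ connects it to the identity and gives zero line integrals of $\omega$, which is trivially uniformly bounded. On the $E$ side, the hypothesis of the corollary supplies a homotopy $\gamma$ from $s \circ \pi$ to $\mathrm{id}_E$ satisfying $\left|\int_{\gamma(x)} \omega \right| < C$ for every $x \in E$; the only sensible reading of this integral is $\int_{\gamma(x)} \pi^*\omega = \int_{\pi \circ \gamma(x)} \omega$, since $\gamma(x)$ is a path in $E$ while $\omega$ lives on $M$. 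This is exactly the required uniform bound on line integrals of $\psi^*\omega = \pi^*\omega$ along the homotopy, so Lemma \ref{lemhomo} applies and yields $\cat(E, \pi^*\omega) = \cat(M, \omega)$.

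The main step is simply recognizing that the projection/zero-section pair realizes the homotopy equivalence between $M$ and $E$ in exactly the form required by Lemma \ref{lemhomo}, so no serious obstacle arises; the corollary is really an unpacking of the homotopy invariance lemma in this specific geometric situation. It is worth noting that if one uses the canonical fiberwise scaling homotopy $H_t(v) = tv$ joining $s \circ \pi$ to $\mathrm{id}_E$, the velocity $\partial_t H_t(v) = v$ is vertical, hence lies in $\ker d\pi$, so $\pi^*\omega$ annihilates it and the line integral vanishes identically. Thus the boundedness hypothesis is automatically satisfied by this natural choice of homotopy, meaning the conclusion $\cat(E, \pi^*\omega) = \cat(M, \omega)$ in fact holds unconditionally for smooth real vector bundles over a closed manifold, and the stated hypothesis is only needed if one insists on working with a homotopy other than the canonical one.
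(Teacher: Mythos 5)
Your proof is correct and takes essentially the same approach as the paper: the paper's one-line proof likewise just invokes the homotopy invariance lemma (Lemma \ref{lemhomo}) with the zero section $s$ and projection $\pi$ realizing the homotopy equivalence, using $\pi\circ s=\mathrm{id}_M$ on one side and the hypothesized bounded homotopy on the other. Your closing observation that the fiberwise scaling homotopy has vertical velocity, so $\int_{\gamma(x)}\pi^{*}\omega$ vanishes and the boundedness hypothesis is automatic for $\pi^{*}\omega$, is also correct and coincides with the paper's own remark after the proof that ``the integral along the fiber is always 0.''
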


\begin{proof}
   Similar to the lemma, $s\circ \pi$ is a homotopy with bounded integral value, so $\cat(E,\pi^{*}\omega)=\cat(M,\omega)$.
\end{proof}

An example of $\omega$ on $E$ that satisfies the conditions of this lemma is the closed 1-form $\omega$ on $M$ that is pulled back by $\pi$. form $\pi^{*}\omega$, and the integral along the fiber is always 0.
Differential forms with compact supports along the fiber is also used in the construction of Thom classes.

\begin{eqnarray*}
   \Omega_{cv}:= \{\omega\in\Omega^{*}(E)|\pi:\text{Supp}(\omega)\rightarrow M \text{is proper} \}
\end{eqnarray*}

\subsection{Relationship between Riemannian metric and the dynamical system of gradient flow}

Investigate how the Riemannian metric is affected on the dynamical system the gradient flow. In particular, we show that depending on how the Riemannian metric is taken, homoclinic cycles do not occur even if the condition about the number of zeros of $\omega$ is satisfied.

\begin{proposition}
   Let $M$ be a compact manifold and $\omega$ a closed 1-form with only one zero point $p$ and $\cat(M,[\omega])\geq 2$.
   Let $f:M\rightarrow \R$ be a smooth function that satisfies the following.

   \begin{itemize}
     \item $f$ takes the minimum value $0$ at $p$
     \item $f^{-1}(0)=\{p\}$
   \end{itemize}

   Take the product Riemannian metric $g=g_M\oplus dt^2$ on $M\times (-1,1)$,
   Consider closed 1-form $\omega^{\prime}:=\omega + d(tf + \frac{t^3}{3})$. In this case, $\omega^{\prime}$ satisfies the condition about the number of zeros, but the gradient flow of $\omega$ with $g$ does not have a homoclinic cycle.
\end{proposition}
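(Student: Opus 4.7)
The plan is to proceed in three stages: (1) identify the zero set of $\omega'$, (2) compare $\cat(M\times(-1,1),\omega')$ with $\cat(M,[\omega])$ using the homotopy invariance lemma (Lemma~\ref{lemhomo}), and (3) show no homoclinic orbit can exist by tracking the $t$-coordinate of the gradient flow. Note that the statement's ``$\omega$'' in the last clause must mean $\omega'$.

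For the zero count, expand
\[
\omega' = \omega + t\,df + (f+t^{2})\,dt.
\]
The $dt$-component $f(x)+t^{2}$ is a sum of two non-negative quantities, so it vanishes only when $f(x)=0$ and $t=0$, i.e. at $(p,0)$. At that point the $M$-component $\omega+t\,df$ reduces to $\omega(p)=0$ automatically. Hence $(p,0)$ is the unique zero of $\omega'$.

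For the category bound, I would apply Lemma~\ref{lemhomo} with $\phi=\pi:M\times(-1,1)\to M$ the projection and $\psi=s:M\hookrightarrow M\times\{0\}$ the zero-section inclusion. Then $\phi\circ\psi=\mathrm{id}_M$ and $\psi\circ\phi$ is homotopic to the identity via $H_{s}(x,t)=(x,(1-s)t)$. Direct computation gives $\psi^{*}\omega'=\omega$, and the line integral of $\omega'$ along $H$ starting from $(x,t)$ equals
\[
\int_{0}^{1}\omega'\!\left(-t\,\partial_{t}\right)ds = -t\,f(x) - \tfrac{t^{3}}{3},
\]
which is bounded in absolute value by $\sup_{M}|f| + \tfrac{1}{3}$ since $M$ is compact and $|t|<1$. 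The trivial homotopy on the $M$-side has integral zero. Thus Lemma~\ref{lemhomo} applies and gives
\[
\cat(M\times(-1,1),\omega') = \cat(M,\omega) = \cat(M,[\omega])\geq 2,
\]
so the number of zeros of $\omega'$ (which is $1$) is strictly less than $\cat(M\times(-1,1),\omega')$.

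For the final step, I compute the gradient with respect to the product metric: writing $\omega'=A+B\,dt$ with $A=\omega+t\,df$ and $B=f+t^{2}$, one gets $\grad(\omega') = \grad_{M}(\omega+t\,df) + (f(x)+t^{2})\,\partial_{t}$, so along any flow line
\[
\dot{t} = f(x) + t^{2} \geq 0,
\]
with equality only at $(p,0)$. By uniqueness of solutions to ODEs, any non-stationary orbit misses $(p,0)$, hence satisfies $\dot t>0$ everywhere, so $t$ is strictly monotonically increasing along it. Since $(p,0)$ is the only zero, a homoclinic cycle would necessarily be a homoclinic orbit with $t(\pm\infty)=0$, contradicting strict monotonicity. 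This concludes the proof. The main obstacle is the second step: one must carefully verify the boundedness hypothesis in Lemma~\ref{lemhomo} for the non-exact summand $(f+t^{2})dt$, since this term is what differentiates $\omega'$ from a pullback form; the other two steps are then essentially bookkeeping.
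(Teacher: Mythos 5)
Your proposal is correct and takes essentially the same route as the paper's own proof: identify $(p,0)$ as the unique zero from the $dt$-component $f+t^{2}$, equate $\cat(M\times(-1,1),\omega')$ with $\cat(M,[\omega])$ via the bounded-integral retraction to $M\times\{0\}$, and rule out homoclinic cycles by the monotonicity of the $t$-coordinate along non-stationary orbits. Your explicit computation of the line integral $-tf(x)-t^{3}/3$ along the retraction is a welcome detail that the paper only asserts.
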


As you can see from the condition, $f$ in the proposition can be a constant function outside the neighborhood of $p$, so if we think about it locally, we can see that there is $f$ that satisfies the condition.

\begin{proof}
   $\cat(M\times (-1,1),\omega^{\prime})=\cat(M,[\omega])$.
   This is because the line integral of $\omega^{\prime}$ along the retract from $M\times (-1,1)$ to $M\times \{0\}$ is bounded.

   The zero point of $\omega^{\prime}$ is only $\{p\}\times \{0\}$, and $\cat(M\times(-1,1),\omega^{\ prime})\geq 2$, this closed 1-form satisfies the condition regarding the zero point. This is shown below.

   \begin{eqnarray}
   \omega^{\prime}=\omega + tdf + (f+t^2) dt
   \end{eqnarray}
   For,

   \begin{eqnarray}
   W := M - \{p\}
   \end{eqnarray}
   Then $\omega^{\prime}$ does not become 0 on $W\times (-1,1)$. This is because the $t$ component is not 0. On $\{p\} \times (-1,1)$

   \begin{eqnarray}
     \omega^{\prime} = \begin{cases}
     0 \quad (p\times \{0\}) \\
     t^2 dt \neq 0 \quad(\{p\}\times(0,1)\cup \{p\}\times(-1,0)) \quad\text{($p$ is the Critical point of $f$)}
   \end{cases}
   \end{eqnarray}
   becomes. In other words, on $p\times(-1,1)$, only $p\times\{0\}$ is the zero point of $\omega^{\prime}$. Therefore $\omega^{\prime}$ has only one zero point.

   At points other than the zero point, the second component of $M\times (-1,1)$ always decreases along the flow. This is when the gradient is displayed in coordinates.

   \begin{eqnarray}
     \grad(\omega^{\prime}) = \begin{pmatrix}
    g_{M,11}^{-1} & g_{M,12}^{-1} & \dots & g_{M,1n}^{-1} & 0 \\
    g_{M,21}^{-1} & \dots & \dots & g_{M,2n}^{-1} & 0 \\
    \vdots & \dots & \dots & \vdots & 0 \\
    g_{M,n1}^{-1} & g_{M,n2}^{-1} & \dots & g_{nn}^{-1} & 0 \\
    0 & 0 & 0 & 0 & 1
\end{pmatrix}
\begin{pmatrix}
     \omega_1 + t \frac{\partial f}{\partial x_1} \\
     \omega_2 + t \frac{\partial f}{\partial x_2} \\
     \vdots \\
     \omega_n + t \frac{\partial f}{\partial x_n} \\
     f + t^2
\end{pmatrix}
   \end{eqnarray}
   This can be seen from the following. $\omega=\sum^n_{i=1}\omega_i dx_i$.

   Since the second component always decreases outside the equilibrium point of flow, it can be seen that there is no homoclinic cycle that converges to $\{p\}\times \{0\}$.
\end{proof}

In order to make the line integral along the homotopy bounded, we consider $M \times (-1,1)$ instead of $M\times \R$. In this case, the product Riemann metric $g$ is not complete, and depending on the order of $f$, the flow protrudes from $(-1,1)$ and $\grad(\omega^{\prime})$ is also complete. It is possible that it is not. It is possible to make the metric $g$ conformally complete ([21]). In that case, $|\grad(\omega^{\prime})|$ approaches 0 around $M\times \{\pm 1\}$, so the assumption of the existence theorem of the homoclinic cycle is not satisfied.

\subsection{Conditions for maintaining homoclinic cycles with perturbations of the Riemannian metric}

We will discuss a certain kind of stability of Homoclinic cycles. In other words, it provides an answer to the question of whether the homoclinic cycle is maintained no matter how much the Riemannian metric is purterbed.

\begin{theorem}\label{thmhom}
   Let $(M,g)$ be a complete Riemannian manifold. Suppose that the closed 1-form $\omega$ on $M$ satisfies the following.
   \begin{itemize}
     \item The number of zeros of $\omega$ is less than $\cat(M,\omega)$
     \item $\grad(\omega)$ is complete, and there is $c>0$ such that $|\grad(\omega)|\geq c$ in outside of some neighborhood of the zeros.
   \end{itemize}
   
In this case, the gradient flow of $\omega$ with respect to $g$ has a homoclinic cycle.
Furthermore, the gradient flow of $\omega$ for the Riemannian metric $g^{\prime}$ that satisfies the following also has a homoclinic cycle.

\begin{itemize}
   \item $g^{\prime}$ is a complete metric
   \item $\grad(\omega)$ with respect to $g^{\prime}$ is a complete vector field
   \item there is a $C\geq 1$ such that
   \begin{eqnarray}
     \frac{g}{C} \leq g^{\prime} \leq Cg
   \end{eqnarray}
\end{itemize}
\end{theorem}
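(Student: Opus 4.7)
The plan is to deduce the second assertion from the first (which is itself Theorem \ref{thmmain} applied to the original data $(M,g,\omega)$) by invoking Theorem \ref{thmmain} once more, this time with the metric $g'$. Thus the task reduces to checking that every hypothesis of the main theorem is preserved under the transition $g \leadsto g'$.

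First I would note that both the number of zeros of $\omega$ and the homotopy invariant $\cat(M,\omega)$ depend only on $\omega$ and not on any metric, so the zero-count hypothesis is automatic; completeness of $(M,g')$ and of the vector field $\grad(\omega)$ with respect to $g'$ are both assumed outright. The remaining --- and only nontrivial --- point is to produce a positive constant $c'>0$ and a neighborhood $U'$ of the zero set of $\omega$ such that the $g'$-gradient satisfies $|\grad(\omega)|_{g'} \geq c'$ on $M - U'$.

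To obtain this bound I would use the bi-Lipschitz comparison $g/C \leq g' \leq Cg$. For positive-definite symmetric bilinear forms on each tangent space, inversion reverses the order, producing the pointwise comparison between the dual forms on the cotangent space:
\begin{eqnarray*}
\frac{g^{-1}}{C} \leq (g')^{-1} \leq C\, g^{-1}.
\end{eqnarray*}
Evaluating on the covector $\omega_x$ at each point $x$, and using the identity $|\grad(\omega)|_h^2 = h^{-1}(\omega,\omega)$ valid for any Riemannian metric $h$, this yields the pointwise estimate
\begin{eqnarray*}
\frac{|\grad(\omega)|_g^2}{C} \leq |\grad(\omega)|_{g'}^2 \leq C\, |\grad(\omega)|_g^2.
\end{eqnarray*}
Taking $U' = U$ to be the neighborhood furnished by the hypothesis for $g$ and setting $c' = c/\sqrt{C}$ then gives $|\grad(\omega)|_{g'} \geq c'$ on $M - U'$, as required.

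With all four hypotheses of Theorem \ref{thmmain} now verified for the data $(M,g',\omega)$, the main theorem delivers a homoclinic cycle for the gradient flow of $\omega$ with respect to $g'$. No substantial obstacle remains: the only point requiring care is the dual-metric inversion, which is a pointwise linear-algebra fact, and all the dynamical content of the argument is absorbed into the prior application of Theorem \ref{thmmain}. The conceptual content of the statement is therefore precisely that the positive lower bound on $|\grad(\omega)|$ --- the one new hypothesis needed in the non-compact setting --- is a bi-Lipschitz invariant of the metric.
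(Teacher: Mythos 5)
Your proposal is correct and follows essentially the same route as the paper: both arguments reduce the second assertion to a re-application of Theorem \ref{thmmain} with the metric $g'$, the only nontrivial point being the lower bound on $|\grad(\omega)|_{g'}$ obtained from the bi-Lipschitz comparison. Your constant $c/\sqrt{C}$ (from the correct dual-metric inversion) is in fact sharper than the paper's stated $c/C$, and either positive constant suffices.
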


\begin{proof}
   The gradient flow of $\omega$ with respect to $g^{\prime}$ satisfies the condition of the existence theorem of homoclinic cycle as follows.
   \begin{itemize}
     \item Outside the neighborhood of the zeros of $\omega$, $|\grad(\omega)|_{g^{\prime}} \geq \frac{c}{C}$
     \item By assumption, $\grad(\omega)$ is complete
   \end{itemize}
   As a result, the gradient flow regarding $g^{\prime}$ also has a homoclinic cycle.
\end{proof}

When $(M,g,\omega)$ has a homoclinic cycle, the metric $g^{\prime}$ which is equivalent to $g$ and complete with $\grad(\omega)$ also has $(M, It was found that g^{\prime},\omega)$ also has a homoclinic cycle.

The condition that the measurements are the same is easy to understand, but
It is generally difficult to check whether a vector field is complete. This is because it is necessary to solve ordinary differential equations on a Riemannian manifold.
With reference to the case of Euclidean space, we examine the conditions under which $\grad(\omega)$ is complete in a specific situation.

\begin{theorem}\label{themequation}
   For $C^{\infty}$ vector field $X=\sum^n_{i=1}f_i(x)\frac{\partial}{\partial x_i}$ on $\R^n$, all Let $f_i$ be bounded for $i$. In this case, $X$ is complete.
\end{theorem}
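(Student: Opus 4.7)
The plan is to invoke the standard ODE escape lemma: a maximal integral curve either exists for all time or leaves every compact set in finite time. The boundedness of the components $f_i$ will prevent the latter possibility.

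First I would fix an initial point $x_0 \in \mathbb{R}^n$ and apply the local existence and uniqueness theorem to obtain a unique maximal integral curve $\gamma:(a,b) \to \mathbb{R}^n$ with $\gamma(0) = x_0$, where $-\infty \leq a < 0 < b \leq \infty$. The goal is to show $a = -\infty$ and $b = +\infty$; by symmetry (replacing $t$ with $-t$) it suffices to handle $b$.

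Next I would use the hypothesis. Set $M := \sup_{x \in \mathbb{R}^n} |f(x)| < \infty$, where $|f(x)|$ denotes the Euclidean norm of $(f_1(x),\dots,f_n(x))$. Since $\dot{\gamma}(t) = f(\gamma(t))$, for any $t \in [0,b)$ we have
\begin{eqnarray*}
|\gamma(t) - x_0| = \left| \int_0^t f(\gamma(s)) \, ds \right| \leq \int_0^t |f(\gamma(s))| \, ds \leq M t.
\end{eqnarray*}
Suppose for contradiction that $b < \infty$. Then $\gamma([0,b)) \subset \overline{B(x_0, Mb)}$, a compact subset of $\mathbb{R}^n$.

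Finally I would invoke the escape lemma for ODEs on manifolds (equivalently, the standard continuation theorem in $\mathbb{R}^n$): if the maximal forward interval $[0,b)$ is finite, then for every compact set $K$ there exists $t^* < b$ with $\gamma(t) \notin K$ for all $t \in (t^*, b)$. This contradicts the fact that $\gamma([0,b))$ lies in the compact ball above. Hence $b = +\infty$, and symmetrically $a = -\infty$, so $X$ is complete.

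The step that requires the most care, though it is still routine, is the escape lemma itself; one can bypass that citation by arguing directly that $\gamma$ is uniformly Lipschitz on $[0,b)$ (since $|\dot\gamma| \leq M$), hence extends continuously to $t = b$, and then reapplying local existence at $\gamma(b)$ to strictly enlarge the interval of existence, contradicting maximality of $b$. There is no serious obstacle: boundedness of $f$ rules out finite-time blow-up in $\mathbb{R}^n$, which is the entirety of the content.
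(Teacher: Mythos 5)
Your proof is correct, but it takes a slightly different route from the paper's. The paper's argument (which is only a few lines) rests on the observation that because $\sup|f|$ is finite \emph{globally}, the local existence theorem yields a time of existence $\epsilon>0$ that can be taken \emph{uniform in the initial point}, so the solution can be prolonged by a fixed amount indefinitely; completeness follows by iteration. You instead derive the a priori estimate $|\gamma(t)-x_0|\le Mt$, conclude that a hypothetical finite maximal time $b$ would trap the trajectory in the compact ball $\overline{B(x_0,Mb)}$, and then contradict the escape lemma (or, in your bypass, use the uniform Lipschitz bound to extend $\gamma$ continuously to $t=b$ and restart). Both are standard and complete; yours is more self-contained and localizes the contradiction at the endpoint of the maximal interval, while the paper's is shorter but leans on the reader knowing that the Peano/Picard existence time depends only on the sup norm of the field near the initial condition. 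Your version also makes explicit the reduction to forward time by the symmetry $t\mapsto -t$, which the paper leaves implicit. No gaps.
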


\begin{proof}
   Since the vector field is smooth, it is locally Lipschitz, and the uniqueness of the solution to the differential equation is shown. Since each coefficient is bounded, a constant time solution can be extended from Peano's existence theorem at each point in $\R^n$. Therefore, $X$ is complete.
\end{proof}

I would like to apply this result to Riemannian manifolds. Analysis on Riemannian manifolds often imposes the following conditions.

\begin{definition}[bounded geometry]
   A connected complete Riemannian manifold $(M,g)$ is called bounded geometry when it satisfies the following conditions:
   \begin{itemize}
     \item The injectivity radius $r_0$ is positive
     \item For any $m$-th covariant differential of the Riemannian curvature tensor $R$, there is a constant $C_m$ such that $|\nabla^m R|<C_m$ holds at each point.
   \end{itemize}

\end{definition}

The positiveness of injectivity radius means that each point on the Riemannian manifold can have a geodesic coordinate with radius $r_0$. Conditions regarding curvature are used to evaluate the curvature tensor, which appears in the Taylor expansion of the metric. Bounded geometry is a class to which various results of analysis, such as results of Sobolev spaces, are extended ([22]).

\begin{proposition}
   Let $(M,g)$ be a bounded geometry and $\omega$ be a closed 1-form on $M$.
   $\grad(\omega)$ is complete when the following is satisfied.
   \begin{itemize}
     \item $|\grad(\omega)|$ is bounded over $M$. Let the upper bound be $C_0$.
   \end{itemize}
\end{proposition}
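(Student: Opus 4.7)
The plan is to reduce completeness of $\grad(\omega)$ on $M$ to the Euclidean case of Theorem \ref{themequation} by working in uniform geodesic normal charts supplied by the bounded geometry hypothesis. The outcome I want is that the flow starting from any $p$ exists for at least a fixed time $T_0>0$ independent of $p$; chaining this gives completeness.

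First, at every point $p\in M$ I would use the positive injectivity radius $r_0$ to pull the picture back to $\R^n$ via $\exp_p : B_{r_0}(0)\subset T_pM \cong \R^n \to M$. The standard normal-coordinate estimates, which depend precisely on the curvature and injectivity-radius bounds packaged into bounded geometry, give uniform two-sided bounds on the components $g_{ij}$ and hence on $g^{ij}$ (uniform ellipticity) with constants independent of $p$. Writing $\omega = \sum_i \omega_i\, dx_i$ in the chart, the coordinate expression of $\grad(\omega)$ is $\sum_{i,j} g^{ij}\omega_j\,\partial/\partial x_i$, and the assumption $|\grad(\omega)|_g\leq C_0$ combined with uniform ellipticity yields a single Euclidean bound $\tilde C$ on its coordinate components, valid on every normal ball $B_{r_0}(p)$ and depending only on $C_0$ and the metric bounds.

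Next I would extract a uniform local existence time. In each chart, $\grad(\omega)$ is a smooth vector field whose coordinate components are bounded by $\tilde C$ on a Euclidean ball of radius $r_0$, so any integral curve starting from the center cannot leave the ball before time $T_0 := r_0/(2\tilde C)$. Peano's theorem, exactly as used in the proof of Theorem \ref{themequation}, then guarantees existence of the flow on $[-T_0,T_0]$. The crucial point is that $T_0$ is a single positive constant that does not depend on the base point $p$. Chaining this into global existence is then routine: from any $p$ the flow reaches $\gamma(T_0)$; recentering the normal chart there extends the flow by another $T_0$, and iterating (together with the symmetric argument in reverse time) gives an integral curve defined on all of $\R$, so $\grad(\omega)$ is complete.

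The main technical obstacle I anticipate is the uniform ellipticity of $g_{ij}$ in normal coordinates with constants independent of the base point. This is the only step where the full force of bounded geometry enters, via classical comparison estimates that express $g_{ij}$ in $\exp_p$-coordinates as a perturbation of $\delta_{ij}$ controlled by $|R|$ and $|\nabla^k R|$; once these bounds are quoted, the remainder of the argument is a straightforward globalization of the Euclidean Theorem \ref{themequation} already available in the paper.
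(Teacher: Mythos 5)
Your proposal is correct and takes essentially the same route as the paper's proof: both use the uniform geodesic normal charts guaranteed by bounded geometry, the curvature bounds to control $g_{ij}$ (and hence the coordinate components of $\grad(\omega)$) uniformly in the base point, and a resulting base-point-independent local existence time that is chained to give completeness. Your write-up is somewhat more explicit about the uniform ellipticity step and the chaining argument, but the underlying argument is the same.
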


\begin{proof}
   Because $(M,g)$ is bounded geometry
   Each point $p$ in $M$ has geodetic coordinates ($U_p,\phi_p$). i.e.

   \[g_{ij}(p)=\delta_{ij} , \text{for any $i,j,k$} ,  \Gamma_{ij}^k(p) = 0 \]
Where $\Gamma_{ij}^k$ is a Christoffel symbol.

   The positivety of the injectivity radius and the boundedness of $|\grad(\omega)|$ are required for the following lemma.

   \begin{lemma}
     There are certain $r,\epsilon>0$ such that $R,\epsilon$ satisfy the following.

     \begin{itemize}
       \item The disk around the radius $r$ of any point $p$ in $M$ is included in the geodetic coordinates of $p$.
       \item The flow $\phi(t)$ generated by $-\grad(\omega)$ whose initial value is $p$ is $\phi(t)$ if $t\in(-\epsilon,\epsilon) t)\in B_r(p)$
     \end{itemize}

     \begin{proof}
       Since $(M,g)$ is a bounded geometry, geodetic coordinates can be secured around any point $p$ with a uniform radius of $r_0$.
       Due to the boundedness of $|\grad(\omega)|$, there exists a certain $\epsilon$, and during the interval $(-\epsilon,\epsilon)$, the flow is $B_{r_0}(p)$. I know that he won't leave.
     \end{proof}

   \end{lemma}

   $\grad(\omega)$

   \begin{eqnarray}
     \grad(\omega) = \sum^n_{i=1} a_i \frac{\partial}{\partial x_i}
   \end{eqnarray}
   Suppose that Formula for Taylor expansion of metric

   \begin{eqnarray}
     g_{ij}(x) = \delta_{ij} - \sum R_{ikjl}x^kx^l + \sum_{[\alpha]\geq 3}(\partial^{\alpha}g_{ij}) (p)\frac{x^{\alpha}}{\alpha !}
   \end{eqnarray}
From the curvature assumption called bounded geometry, $\grad(\omega)$ is independent of $p$ even for Euclidean metrics on geodesic coordinates of radius $r_0$ around any point $p$. It can be suppressed by a constant. Therefore the differential equation

\begin{eqnarray}
   \dot{x} = \grad(\omega),\quad x(0)=p
\end{eqnarray}
Since the vector field is smooth, it becomes locally Lipschitz, and there exists $\epsilon>0$ that does not depend on $p$, and there exists a unique solution between $(-\epsilon,\epsilon)$ (theorem \ref{ theme}).
Therefore, $\grad(\omega)$ becomes a complete vector field.

\end{proof}

This result is the result of generalizing the theorem \ref{themequation} to Riemannian manifolds.

Combined with the theorem \ref{thmhom}, if $|\grad(\omega)|$ is bounded, the homoclinic cycle is maintained even if the metric is perturbed within the range where the bounded geometry is maintained and the metric is equivalent.
On compact manifolds, a homoclinic cycle exists even if the number of zeros is less than $\cat(M,[\omega]$, but on non compact manifolds, conditions about metrics and curvature are required for the existence of homoclinic cycles. This is a new phenomenon related dynamical systems of gradient flow of $\grad(\omega)$ on non compact manifolds, and provides a new connection between closed 1-forms, dynamical systems, topology, and curvature.

\newpage

\end{document}